\newtheorem{theorem}{Theorem}
\theoremstyle{plain}
\newtheorem{corollary}{Corollary}
\newtheorem{lemma}{Lemma}
\numberwithin{equation}{section}
\begin{document}
\title{On some properties of new paranormed sequence space of non-absolute
type}
\author{Vatan Karakaya}
\address{Department of Mathematical Engineering, Yildiz Technical
University, Davutpasa Campus, Esenler, \.{I}stanbul-Turkey}
\email{vkkaya@yildiz.edu.tr or vkkaya@yahoo.com}
\author{Necip \c{S}im\c{s}ek}
\address{Istanbul Commerce University, Department of Mathematics, Uskudar,
Istanbul,Turkey}
\email{necsimsek@yahoo.com}
\author{Harun Polat}
\address{Department of Mathematics,Faculty of Art and Science,Mu\c{s}
Alparslan University, 49100 Mu\c{s}, Turkey}
\email{h.polat@alparslan.edu.tr}
\subjclass[2000]{46A45; 40H05; 46B45}
\keywords{Paranormed sequence spaces; $\alpha -$, $\beta -$ and $\gamma -$%
duals; weighted mean; $\lambda -$sequence spaces, matrix mapping}

\begin{abstract}
In this work, we introduce some new generalized sequence space related to
the space $\ell (p)$. Furthermore we investigate some topological properties
as the completeness, the isomorphism and also we give some inclusion
relations between this sequence space and some of the other sequence spaces.
In addition, we compute $\alpha -$, $\beta -$ and $\gamma -$duals of this
space, and characterize certain matrix transformations on this sequence
space.
\end{abstract}

\maketitle

\section{\textbf{Introduction}}

In studying the sequence spaces, especially, to obtain new sequence spaces,
in general, the matrix domain $\mu _{A}$ of an infinite matrix $A$ defined
by $\mu _{A}=\{x=(x_{k})\in w:Ax\in \mu \}$ is used. In the most cases, the
new sequence space $\mu _{A}$ generated by a sequence space $\mu $ is the
expansion or the contraction of the original space $\mu $. In some cases,
these spaces could be overlap. Indeed, one can easily see that the inclusion
$\mu _{S}\subset \mu $ strictly holds for $\mu \in \{\ell _{\infty
},c,c_{0}\}$. Similarly one can deduce that the inclusion $\mu \subset \mu
_{\Delta }$ also strictly holds for $\mu \in \{\ell _{\infty },c,c_{0}\};$
where $S$ and $\Delta $ are matrix operators.

Recently, in \cite{MursaleenNoman}, Mursaleen and Noman constructed new
sequence spaces by using matrix domain over a normed space. They also
studied some topological properties and inclusion relations of these spaces.

It is well known that paranormed spaces have more general properties than
the normed spaces. In this work, we generalize the normed sequence spaces
defined by Mursaleen \cite{MursaleenNoman} to the paranormed spaces.
Furthermore we introduce new sequence space over the paranormed space. Next
we investigate behaviors of this sequence space according to topological
properties and inclusion relations. Finally we give certain matrix
transformation on this sequence space and its duals.

In the literature, by using the matrix domain over the paranormed
spaces, many authors have defined new sequence spaces. Some of
them are as the following. For example; Choudhary and Mishra
\cite{ChMis} have defined the sequence space $\ell
\overline{\left( p\right)}$ which the $S-$transform is
in $\ell \left( p\right)$, Basar and Altay$\left( \text{\cite{PBasarAltay1},%
\cite{PBasarAltay2}}\right)$ defined the spaces $\lambda \left( u,v;p\right)
=\left\{ \lambda \left( p\right) \right\}_{G}$ for $\lambda \in \left\{ \ell
_{\infty },c,c_{0}\right\}$ and $\ell \left( u,v;p\right) =\left\{ \ell
\left( p\right) \right\}_{G}$ respectively, and Altay and Basar \cite%
{AltayBasar1} have defined the spaces $r_{\infty }^{t}\left( p\right),
r_{c}^{t}\left( p\right), r_{0}^{t}\left( p\right)$. In \cite{PKarakayaPolat}%
, Karakaya and Polat defined and examined the spaces $e_{0}^{r}\left( \Delta
;p\right), e^{r}\left( \Delta ;p\right), e_{\infty }^{r}\left( \Delta
;p\right)$, and Karakaya, Noman and Polat \cite{PKarakayaNH} have recently
introduced and studied the spaces $\ell _{\infty }\left( \lambda ,p\right), $
$c\left( \lambda ,p\right) $, $c_{0}\left( \lambda, p\right)$; where $R^{t}$
and $E^{r}$ denote the Riesz and the Euler means, respectively, $\Delta $
denotes the band matrix of the difference operators, and $\Lambda$, $G$ are
defined in \cite{MursaleenNoman} and \cite{Makowskysavas}, respectively.

By $w$, we denote the space of all real valued sequences. Any vector
subspace of $w$ is called a sequence space. By the spaces $\ell _{1}$, $cs$
and $bs$, we denote the spaces of all absolutely convergent series,
convergent series and bounded series, respectively.

A linear topological space $X$ over the real field $\mathbb{R}$ is said to
be a paranormed space if there is a subadditivity function $h:X\rightarrow
\mathbb{R}$ such that $h\left( \theta \right) =0$, $h\left( x\right)
=h\left( -x\right) $ and scalar multiplication is continuous, i.e.; $%
\left\vert \alpha _{n}-\alpha \right\vert \rightarrow 0$ and $h\left(
x_{n}-x\right) \rightarrow 0$ imply $h\left( \alpha _{n}x_{n}-\alpha
x\right) \rightarrow 0$ for all $\alpha $ in $\mathbb{R}$ and $x$ in $X$,
where $\theta $ is the zero in the linear space $X$.

Let $\mu ,\nu $ be any two sequence spaces and let $A=\left( a_{nk}\right) $
be any infinite matrix of real number $a_{nk}$, where $n,k\in \mathbb{N}$
with $\mathbb{N}=\left\{ 0,1,2,...\right\} $. Then we say that $A$ defines a
matrix mapping from $\mu $ into $\nu $ by writing$\ A:\mu \rightarrow \nu ,$
if for every sequence $x=\left( x_{k}\right) \in \mu ,$ the sequence $%
Ax=\left( A_{n}\left( x\right) \right) $, the $A-$transform of $x$, is in $%
\nu ,$ where%
\begin{equation}
A_{n}\left( x\right) =\sum\limits_{k}a_{nk}x_{k}\text{ \ \ }\left( n\in
\mathbb{N}\right) \text{.}  \label{1.1}
\end{equation}%
By $\left( \mu ,\nu \right) $, we denote the class of all matrices $A$ such
that $A:\mu \rightarrow \nu $. Thus, $A\in \left( \mu ,\nu \right) $ if and
only if \ the series on the right hand side of $\left( 1.1\right) $
converges for each $n\in \mathbb{N}$ and every $x\in \mu ,$ and we have $%
Ax\in \nu $ for all $x\in \mu $. A sequence $x$ is said to be $A-$summable
to $a$ if $Ax$ converges to $a$ which is called as the $A-$limit of $x$.

Assume here and after that$\ \left( p_{k}\right) $, $\left( q_{k}\right) $
are bounded sequences of strictly positive real numbers with $\sup p_{k}=H$
and $M=\max \left( 1,H\right) $, also let $\grave{p}_{k}=\frac{p_{k}}{p_{k}-1%
}$ for $1<p_{k}<\infty $ and for all $k\in \mathbb{N}$ . The linear space $%
\ell (p)$ was defined by Maddox \cite{Maddox1967} as follows.%
\begin{equation*}
\ell (p)=\left\{ x=\left( x_{n}\right) \in w:\dsum\limits_{n=0}^{\infty
}\left\vert x_{n}\right\vert ^{p_{n}}<\infty \right\}
\end{equation*}%
which are the complete space paranormed by%
\begin{equation*}
h\left( x\right) =\left( \dsum\limits_{n=0}^{\infty }\left\vert
x_{n}\right\vert ^{p_{n}}\right) ^{\frac{1}{M}}.\text{ }
\end{equation*}%
Throughout this work, by $\digamma $ and $N_{k}$ respectively, we shall
denote the collection of all subsets of $\mathbb{N}$ and the set of all $%
n\in \mathbb{N}$ such that $n\geq k$ and $e=\left( 1,1,1,...\right) .$

\section{\textbf{The sequence space} $\ell \left(\protect\lambda, p\right)$}

In this section, we define the sequence spaces $\ell \left( \lambda
,p\right) $ and prove that this sequence space according to its paranorm are
complete paranormed linear spaces. In \cite{MursaleenNoman}, Mursaleen and
Noman defined the matrix $\Lambda =\left( \lambda _{nk}\right)
_{n,k=0}^{\infty }$ by%
\begin{equation}
\lambda _{nk}=\QDATOPD\{ . {\frac{\lambda _{k}-\lambda _{k-1}}{\lambda _{n}}
;\text{ \ \ }\left( 0\leq k\leq n\right)}{0 ;\text{ \ \ \ \ \ }\left(
k>n\right) }  \label{2.1}
\end{equation}%
where $\lambda =\left( \lambda _{k}\right) _{k=0}^{\infty }$ be a strictly
increasing sequence of positive reals tending to $\infty $, that is, $%
0<\lambda _{0}<\lambda _{1}<...$ and $\lambda _{k}\rightarrow \infty $ as $%
k\rightarrow \infty $. Now, by using $(2.1)$ we define new sequence space as
follows:%
\begin{equation*}
\ell \left( \lambda ,p\right) =\left\{ x=\left( x_{k}\right) \in
w:\sum\limits_{n=0}^{\infty }\left\vert \frac{1}{\lambda _{n}}%
\tsum\limits_{k=0}^{n}\left( \lambda _{k}-\lambda _{k-1}\right)
x_{k}\right\vert ^{p_{n}}<\infty \right\}.
\end{equation*}%
For any $x=(x_{n})\in w$, we define the sequence $y=(y_{n})$, which will
frequently be used, as the $\Lambda $-transform of $x$, i.e., $y=\Lambda (x)$
and hence
\begin{equation}
y_{n}=\sum_{k=0}^{n}\left( \frac{\lambda _{k}-\lambda _{k-1}}{\lambda _{n}}%
\right) x_{k}~~~~(n\in N).  \label{2.2}
\end{equation}%
We now may begin with the following theorem.

\begin{theorem}
The sequence space $\ell \left( \lambda ,p\right) $ is the complete linear
metric space with respect to paranorm defined by
\end{theorem}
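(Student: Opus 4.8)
The plan is to route everything through the $\Lambda$-transform $y=\Lambda(x)$ of $(2.2)$, since by definition $x\in\ell(\lambda,p)$ precisely when $y=\Lambda(x)\in\ell(p)$, and the paranorm in the statement is
\begin{equation*}
h_{\lambda}(x)=\left(\sum_{n=0}^{\infty}\left\vert \frac{1}{\lambda_{n}}\sum_{k=0}^{n}(\lambda_{k}-\lambda_{k-1})x_{k}\right\vert ^{p_{n}}\right)^{1/M}=\left(\sum_{n=0}^{\infty}\vert y_{n}\vert ^{p_{n}}\right)^{1/M}=h\left(\Lambda(x)\right),
\end{equation*}
where $h$ is Maddox's paranorm on $\ell(p)$. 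The key structural remark is that $\Lambda=(\lambda_{nk})$ is a triangle: its diagonal entries $\lambda_{nn}=(\lambda_{n}-\lambda_{n-1})/\lambda_{n}$ are nonzero since $(\lambda_{k})$ strictly increases, so $\Lambda$ is invertible and maps $w$ bijectively onto $w$, with a triangular inverse. Hence $\ell(\lambda,p)=\{x\in w:\Lambda(x)\in\ell(p)\}$ is linear, and both the paranorm axioms and completeness can be transferred from the known properties of $\ell(p)$ through the linear bijection $\Lambda$.

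First I would verify that $h_{\lambda}$ is a paranorm. Because $h_{\lambda}=h\circ\Lambda$ with $\Lambda$ linear, each axiom follows from the corresponding one for $h$: clearly $h_{\lambda}(\theta)=0$ and $h_{\lambda}(x)=h_{\lambda}(-x)$, while subadditivity reduces, via $\Lambda_{n}(x+z)=\Lambda_{n}(x)+\Lambda_{n}(z)$, to the Minkowski-type inequality $(\sum_{n}\vert a_{n}+b_{n}\vert ^{p_{n}})^{1/M}\le(\sum_{n}\vert a_{n}\vert ^{p_{n}})^{1/M}+(\sum_{n}\vert b_{n}\vert ^{p_{n}})^{1/M}$ that makes $h$ subadditive on $\ell(p)$ (valid as $M=\max(1,H)\ge 1$). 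For continuity of scalar multiplication I would split $\alpha_{i}x^{i}-\alpha x=\alpha_{i}(x^{i}-x)+(\alpha_{i}-\alpha)x$, apply subadditivity, and use the homogeneity bound $h_{\lambda}(\alpha z)\le\max(1,\vert \alpha\vert ^{H/M})h_{\lambda}(z)$ coming from $\vert \alpha\vert ^{p_{n}}\le\max(1,\vert \alpha\vert ^{H})$; since the analogous argument already establishes scalar continuity for $h$ on $\ell(p)$, it transfers verbatim.

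For completeness, let $(x^{i})_{i}$ be Cauchy in $\ell(\lambda,p)$ and put $y^{i}=\Lambda(x^{i})$. By linearity $h_{\lambda}(x^{i}-x^{j})=h(y^{i}-y^{j})$, so $(y^{i})_{i}$ is Cauchy in $\ell(p)$; invoking the completeness of $\ell(p)$ quoted above, there is $y\in\ell(p)$ with $h(y^{i}-y)\to 0$. Setting $x=\Lambda^{-1}(y)$ gives $\Lambda(x)=y\in\ell(p)$, hence $x\in\ell(\lambda,p)$, and $h_{\lambda}(x^{i}-x)=h(y^{i}-y)\to 0$. This shows $(x^{i})$ converges in $\ell(\lambda,p)$, so the space is complete; being a paranormed linear space, it is a complete linear metric space for $d(x,z)=h_{\lambda}(x-z)$.

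The step I expect to require the most care is precisely the transfer through $\Lambda^{-1}$: one must confirm that the $\Lambda$-preimage $x=\Lambda^{-1}(y)$ is a genuine element of $w$ with $\Lambda(x)=y$ (guaranteed here by the triangularity and invertibility of $\Lambda$) and that the paranorm identity $h_{\lambda}=h\circ\Lambda$ makes $\Lambda$ a paranorm isometry, so that Cauchy-ness and limits pass back and forth without loss. If one instead wants a self-contained argument not citing the completeness of $\ell(p)$, the analytic heart is a truncation argument: from $\sum_{n=0}^{m}\vert y_{n}^{i}-y_{n}^{j}\vert ^{p_{n}}<\varepsilon$ one lets $j\to\infty$ (a finite sum, so the coordinatewise limits $y_{n}^{j}\to y_{n}$ pass through) and then $m\to\infty$ to obtain $\sum_{n=0}^{\infty}\vert y_{n}^{i}-y_{n}\vert ^{p_{n}}\le\varepsilon$, which delivers both $y\in\ell(p)$ and $h_{\lambda}(x^{i}-x)\to 0$ simultaneously.
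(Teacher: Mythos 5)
Your proof is correct, but it takes a genuinely different route from the paper's. You first establish that $\Lambda$ is a triangle (nonzero diagonal entries), hence a linear bijection of $w$ with triangular inverse, note the isometry $h_{\lambda}=h\circ \Lambda$, and then transfer both the paranorm axioms and completeness from Maddox's complete space $\ell(p)$; in effect you prove the paper's Theorem 2 (the linear isomorphism $\ell(\lambda,p)\cong \ell(p)$, realized by exactly this map) first and read Theorem 1 off as a corollary --- the standard ``matrix domain of a triangle in a complete paranormed space is complete'' argument. The paper instead verifies everything directly in $\ell(\lambda,p)$: subadditivity via the Minkowski-type inequality $(2.3)$ and Maddox's inequality $(2.4)$, scalar continuity by hand, and completeness by the truncation argument you sketch only as a self-contained alternative (coordinatewise Cauchy-ness of $\Lambda_{n}(x^{j})$, limits in $\mathbb{R}$, then summation); notably, the paper never explicitly constructs $x$ from the limit values $\{\Lambda_{n}(x)\}$, i.e.\ it glosses over precisely the inversion step you single out as the delicate point, and it also does not explicitly verify $h(x^{j}-x)\rightarrow 0$, which your argument does give. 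Your route buys brevity and rigor at the inversion step at the cost of citing completeness of $\ell(p)$; the paper's route is self-contained, using only completeness of $\mathbb{R}$. One small caution in your write-up: for scalar continuity, the homogeneity bound $h_{\lambda}(\alpha z)\leq \max\left(1,\left\vert \alpha \right\vert^{H/M}\right)h_{\lambda}(z)$ alone does not make the term $h_{\lambda}\left((\alpha_{i}-\alpha)x\right)$ vanish as $\alpha_{i}\rightarrow \alpha$ (the bound tends to $h_{\lambda}(x)$, not to $0$); one needs the fixed-vector, dominated-convergence argument for $\ell(p)$, which your fallback of transferring the known scalar continuity of $h$ on $\ell(p)$ through the linear isometry does legitimately supply.
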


\begin{center}
$h\left( x\right) =\left( \sum\limits_{n=0}^{\infty }\left\vert \frac{1}{%
\lambda _{n}}\tsum\limits_{k=0}^{n}\left( \lambda _{k}-\lambda _{k-1}\right)
x_{k}\right\vert ^{p_{n}}\right) ^{\frac{1}{M}}$.
\end{center}

\begin{proof}
The linearity of $\ell \left( \lambda ,p\right) $ with respect to the
coordinatewise addition and scalar multiplication follows from the following
inequalities which are satisfied for $x,t\in \ell \left( \lambda ,p\right) $
(see; \cite{Maddoxelmt1988}).%
\begin{equation}
\left( \sum\limits_{n=0}^{\infty }\left\vert \frac{1}{\lambda _{n}}%
\tsum\limits_{k=0}^{n}\left( \lambda _{k}-\lambda _{k-1}\right) \left(
x_{k}+t_{k}\text{ }\right) \right\vert ^{p_{n}}\right) ^{\tfrac{1}{M}}\leq
\left( \sum\limits_{n=0}^{\infty }\left\vert \frac{1}{\lambda _{n}}%
\tsum\limits_{k=0}^{n}\left( \lambda _{k}-\lambda _{k-1}\right)
x_{k}\right\vert ^{p_{n}}\right) ^{\tfrac{1}{M}}  \label{2.3}
\end{equation}%
\begin{equation*}
\text{ \ \ \ \ \ \ \ \ \ \ \ \ \ \ \ \ \ \ \ \ \ \ \ \ \ \ \ \ \ \ \ \ \ \ \
\ \ \ \ \ \ \ \ \ \ \ \ \ \ \ \ \ \ \ \ \ \ \ \ \ \ \ }+\left(
\sum\limits_{n=0}^{\infty }\left\vert \frac{1}{\lambda _{n}}%
\tsum\limits_{k=0}^{n}\left( \lambda _{k}-\lambda _{k-1}\right) t_{k}\text{ }%
\right\vert ^{p_{n}}\right) ^{\tfrac{1}{M}}
\end{equation*}%
and for any $\alpha \in \mathbb{R}$ (see;\cite{I.J.Maddox1968})%
\begin{equation}
\left\vert \alpha \right\vert ^{p_{k}}\leq \max \left\{ 1,\left\vert \alpha
\right\vert ^{M}\right\} .  \label{2.4}
\end{equation}%
It is clear that $h\left( \theta \right) =0$, $h\left( x\right) =h\left(
-x\right) $ for all $x\in \ell \left( \lambda ,p\right) $. Again the
inequalities (2.3) and (2.4) yield the subadditivity of $h$ and hence $%
h\left( \alpha x\right) \leq \max \left\{ 1,\left\vert \alpha \right\vert
^{M}\right\} h\left( x\right) $. Let $\left\{ x^{m}\right\} $ be any
sequence of points $x^{m}\in \ell \left( \lambda ,p\right) $ such that $%
h\left( x^{m}-x\right) \rightarrow 0$ and $\left( \alpha _{m}\right) $ also
be any sequence of scalars such that $\alpha _{m}\rightarrow \alpha $. Then,
since the inequality%
\begin{equation*}
h\left( x^{m}\right) \leq h\left( x\right) +h\left( x^{m}-x\right)
\end{equation*}%
holds by subadditivity of $h$, we can write that $\left\{ h\left(
x^{m}\right) \right\} $ is bounded and we thus have%
\begin{eqnarray*}
h\left( \alpha _{m}x^{m}-\alpha x\right) &=&\left( \sum\limits_{n=0}^{\infty
}\left\vert \frac{1}{\lambda _{n}}\tsum\limits_{k=0}^{n}\left( \lambda
_{k}-\lambda _{k-1}\right) \left( \alpha _{m}x_{k}^{m}-\alpha x_{k}\right)
\right\vert ^{p_{n}}\right) ^{\tfrac{1}{M}} \\
&\leq &\left\vert \alpha _{m}\rightarrow \alpha \right\vert ^{\frac{1}{M}%
}h\left( x^{m}\right) +\left\vert \alpha \right\vert ^{\frac{1}{M}}h\left(
x^{m}-x\right)
\end{eqnarray*}%
which tends to zero as $n\rightarrow \infty $. Therefore, the scalar
multiplication is continuous. Hence $h$ is a paranorm on the space $\ell
\left( \lambda ,p\right) $. It remains to prove the completeness of the
space $\ell \left( \lambda ,p\right) $. Let $\left\{ x^{j}\right\} $ be any
Cauchy sequence in the space $\ell \left( \lambda ,p\right) $, where $%
x^{j}=\left\{ x_{0}^{\left( j\right) },x_{1}^{\left( j\right)
},x_{2}^{\left( j\right) },...\right\} $. Then, for a given $\varepsilon >0,$
there exists a positive integer $m_{0}\left( \varepsilon \right) $ such that
$h\left( x^{j}-x^{i}\right) <\frac{\varepsilon }{2}$ for all $i$,$%
j>m_{0}\left( \varepsilon \right) $. Using definition of $h,$ we obtain for
each fixed $n\in \mathbb{N}$ that%
\begin{equation}
\left\vert \Lambda _{n}\left( x^{j}\right) -\Lambda _{n}\left( x^{i}\right)
\right\vert \leq \left( \sum\limits_{n=0}^{\infty }\left\vert \Lambda
_{n}\left( x^{j}\right) -\Lambda _{n}\left( x^{i}\right) \right\vert
^{p_{n}}\right) ^{\tfrac{1}{M}}<\frac{\varepsilon }{2}  \label{2.5}
\end{equation}%
for every $i$,$j>m_{0}\left( \varepsilon \right) $ which leads us to the
fact that $\left\{ \Lambda _{n}\left( x^{0}\right) ,\Lambda _{n}\left(
x^{1}\right) ,\Lambda _{n}\left( x^{2}\right) ,...\right\} $ is a Cauchy
sequence of real numbers for every fixed $n\in \mathbb{N}$. Since $\mathbb{R}
$ is complete, it converges, say $\Lambda _{n}\left( x^{i}\right) -\Lambda
_{n}\left( x\right) $ as $i\rightarrow \infty $. Using these infinitely many
limits, we may write the sequence $\left\{ \Lambda _{0}\left( x\right)
,\Lambda _{1}\left( x\right) ,\Lambda _{2}\left( x\right) ,...\right\} $.
From $(2.5)$ as $i\rightarrow \infty $, we have%
\begin{equation*}
\left\vert \Lambda _{n}\left( x^{j}\right) -\Lambda _{n}\left( x\right)
\right\vert <\frac{\varepsilon }{2},\left( j\geq m_{0}\left( \varepsilon
\right) \right)
\end{equation*}%
for every fixed $n\in \mathbb{N}$. Since $x^{j}=\left( x_{k}^{\left(
j\right) }\right) \in \ell \left( \lambda ,p\right) $ for each $j\in \mathbb{%
N}$, there exists $m_{0}\left( \varepsilon \right) \in \mathbb{N}$ such that
$\left( \sum\limits_{n=0}^{\infty }\left\vert \Lambda _{n}\left(
x^{j}\right) \right\vert ^{p_{_{n}}}\right) ^{\tfrac{1}{M}}<\frac{%
\varepsilon }{2}$ for every $j\geq m_{0}\left( \varepsilon \right) $ and for
each $n\in \mathbb{N}$. By taking a fixed $j\geq m_{0}\left( \varepsilon
\right) $, we obtain by $(2.5)$ that%
\begin{equation*}
\left( \sum\limits_{n=0}^{\infty }\left\vert \Lambda _{n}\left( x\right)
\right\vert ^{p_{_{n}}}\right) ^{\tfrac{1}{M}}\leq \left(
\sum\limits_{n=0}^{\infty }\left\vert \Lambda _{n}\left( x^{j}\right)
-\Lambda _{n}\left( x^{i}\right) \right\vert ^{p_{_{n}}}\right) ^{\tfrac{1}{M%
}}+\left( \sum\limits_{n=0}^{\infty }\left\vert \Lambda _{n}\left(
x^{j}\right) \right\vert ^{p_{_{n}}}\right) ^{\tfrac{1}{M}}<\infty .
\end{equation*}%
Hence, we get $x\in \ell \left( \lambda ,p\right) $. So, the space $\ell
\left( \lambda ,p\right) $ is complete.
\end{proof}

\begin{theorem}
The sequence space $\ell \left( \lambda ,p\right) $ of non-absolute type is
linearly isomorphic to the space $\ell \left( p\right) $; where $0<p_{k}\leq
H<\infty $.
\end{theorem}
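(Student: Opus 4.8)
The plan is to exhibit the obvious candidate for the isomorphism, namely the $\Lambda$-transform itself, and then verify bijectivity by constructing an explicit inverse. Define the map $T:\ell(\lambda,p)\to\ell(p)$ by $T(x)=\Lambda(x)=y$, with $y=(y_n)$ given by (2.2). Linearity of $T$ is immediate from the linearity of the finite sums defining $\Lambda$. The crucial observation, which I would state first, is that the paranorm on $\ell(\lambda,p)$ was defined precisely so that
\begin{equation*}
h(x)=\left(\sum_{n=0}^{\infty}\left\vert y_n\right\vert^{p_n}\right)^{\frac{1}{M}}=g(y),
\end{equation*}
where $g$ denotes the natural paranorm of $\ell(p)$. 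Thus $T$ is paranorm-preserving by construction, so once it is shown to be a bijection it is automatically a paranorm isomorphism (in particular a homeomorphism). This reduces everything to verifying that $T$ is one-to-one and onto.

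For injectivity and, more importantly, for later surjectivity, I would invert the relation (2.2). Writing $S_n=\lambda_n y_n=\sum_{k=0}^{n}(\lambda_k-\lambda_{k-1})x_k$ (with the convention $\lambda_{-1}=0$, so that $y_0=x_0$), the summand telescopes and gives $S_n-S_{n-1}=(\lambda_n-\lambda_{n-1})x_n$. Since $\lambda$ is strictly increasing we have $\lambda_n-\lambda_{n-1}>0$, so we may solve to obtain
\begin{equation*}
x_n=\frac{1}{\lambda_n-\lambda_{n-1}}\bigl(\lambda_n y_n-\lambda_{n-1}y_{n-1}\bigr)\qquad(n\geq 1),\qquad x_0=y_0.
\end{equation*}
Injectivity follows at once: if $T(x)=\theta$ then $y_n=0$ for every $n$, and the displayed formula forces $x_n=0$ for all $n$, whence $x=\theta$.

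For surjectivity, given any $y=(y_n)\in\ell(p)$ I would \emph{define} a sequence $x=(x_n)$ by the inverse formula above and then check that $\Lambda(x)=y$; this is a routine telescoping verification that the two transforms are mutually inverse on $w$. Because $\Lambda(x)=y$, the defining series of $\ell(\lambda,p)$ for $x$ is exactly $\sum_n\lvert y_n\rvert^{p_n}$, which is finite as $y\in\ell(p)$; hence $x\in\ell(\lambda,p)$ and $T(x)=y$. This proves $T$ is onto and completes the argument, yielding the linear isomorphism $\ell(\lambda,p)\cong\ell(p)$. The only genuinely non-formal step is the derivation and telescoping verification of the inverse transform, together with keeping track of the boundary convention $\lambda_{-1}=0$ that handles the $n=0$ term correctly; everything else is formal once the paranorm-preservation identity $h(x)=g(\Lambda(x))$ is recorded.
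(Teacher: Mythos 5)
Your proposal is correct and follows essentially the same route as the paper: both use the $\Lambda$-transform as the isomorphism $T$, invert it via the same formula $x_n=\bigl(\lambda_n y_n-\lambda_{n-1}y_{n-1}\bigr)/\bigl(\lambda_n-\lambda_{n-1}\bigr)$ (which is exactly the paper's two-term alternating sum written out), and conclude surjectivity together with $x\in\ell(\lambda,p)$ from the paranorm identity $h(x)=g(\Lambda(x))$. Your write-up is in fact slightly more careful than the paper's, since you derive the inverse by telescoping and spell out injectivity rather than calling it obvious.
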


\begin{proof}
To prove the theorem, we should show the existence of linear bijection
between the spaces $\ell \left( \lambda ,p\right) $ and $\ell \left(
p\right) $. With the notation of $(2.2)$, we define transformation $T$ from $%
\ell \left( \lambda ,p\right) $ to $\ell \left( p\right) $ by $x\rightarrow
y=Tx$. The linearity of $T$ is trivial. Furthermore, it is obvious that $%
x=\theta $ whenever $Tx=\theta $ and hence $T$ is injective.

Let $y\in \ell \left( p\right) $ and define the sequence $x=\left\{
x_{n}\right\} $%
\begin{equation*}
x_{n}\left( \lambda \right) =\tsum\limits_{k=n-1}^{n}\left( \left( -\right)
^{n-k}\frac{\lambda _{k}}{\lambda _{n}-\lambda _{n-1}}\right) y_{k}\qquad
\left( n,k\in \mathbb{N}\right) .
\end{equation*}%
Then, we have%
\begin{equation*}
h_{\ell \left( \lambda ,p\right) }\left( x\right) =\left(
\sum\limits_{n=0}^{\infty }\left\vert \frac{1}{\lambda _{n}}%
\tsum\limits_{k=0}^{n}\left( \lambda _{k}-\lambda _{k-1}\right)
x_{k}\right\vert ^{p_{n}}\right) ^{\tfrac{1}{M}}=\left(
\sum\limits_{n=0}^{\infty }\left\vert y_{n}\right\vert ^{p_{n}}\right) ^{%
\tfrac{1}{M}}=h_{\ell \left( p\right) }\left( y\right) .
\end{equation*}%
Thus, we have that $x\in \ell \left( \lambda ,p\right) $ and consequently $T$
is surjective. Hence, $T$ is a linear bijection and this says us that the
spaces $\ell \left( \lambda ,p\right) $ and $\ell \left( p\right) $ linearly
isomorphic. This completes the proof.
\end{proof}

\section{\textbf{Some inclusion relations}}

In this section, we give some inclusion relations concerning the
space $\ell \left( \lambda ,p\right) $. Before giving the theorems
about the section, we give a Lemma given in \cite{MursaleenNoman}.

\begin{lemma}
For any sequence $x=\left( x_{k}\right) \in w,$ the equalities%
\begin{equation}
S_{n}\left( x\right) =x_{n}-\Lambda _{n}\left( x\right)  \label{3.1}
\end{equation}%
and%
\begin{equation*}
S_{n}\left( x\right) =\frac{\lambda _{n-1}}{\lambda _{n}-\lambda _{n-1}}%
\left[ \Lambda _{n}\left( x\right) -\Lambda _{n-1}\left( x\right) \right]
\end{equation*}%
hold, where the sequence $S\left( x\right) =\left\{ S_{n}\left( x\right)
\right\} $ is defined by%
\begin{equation*}
S_{0}\left( x\right) =0\text{ and }S_{n}\left( x\right) =\frac{1}{\lambda
_{n}}\tsum\limits_{k=1}^{n}\lambda _{k-1}\left( x_{k}-x_{k-1}\right) \text{
\ \ }\left( n\geq 1\right).
\end{equation*}
\end{lemma}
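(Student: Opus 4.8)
The plan is to verify both identities by direct algebraic manipulation starting from the definitions of $\Lambda_n(x)$ in $(2.2)$ and of $S_n(x)$, establishing the first equality $(3.1)$ first and then deriving the second from it. Throughout I would adopt the convention $\lambda_{-1}=0$, so that the $k=0$ term of $\Lambda_n(x)$ reads $\lambda_0 x_0/\lambda_n$ and the telescoping identity $\sum_{k=0}^n(\lambda_k-\lambda_{k-1})=\lambda_n$ holds; this is precisely what makes $\Lambda$ a weighted-mean type operator whose row sums equal $1$. Since the statement is asserted for every $x\in w$, no analytic input (convergence, completeness) is needed: these are finite algebraic relations among the entries, and the whole proof is formal.

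For the first identity, I would clear denominators and instead establish the equivalent statement $\lambda_n S_n(x)=\lambda_n\bigl(x_n-\Lambda_n(x)\bigr)$. On the right, $\lambda_n\bigl(x_n-\Lambda_n(x)\bigr)=\lambda_n x_n-\sum_{k=0}^n(\lambda_k-\lambda_{k-1})x_k$; splitting off the top term $(\lambda_n-\lambda_{n-1})x_n$ and using $\lambda_n x_n-(\lambda_n-\lambda_{n-1})x_n=\lambda_{n-1}x_n$ collapses this to $\lambda_{n-1}x_n-\lambda_0 x_0-\sum_{k=1}^{n-1}(\lambda_k-\lambda_{k-1})x_k$. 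Independently I would expand $\lambda_n S_n(x)=\sum_{k=1}^n\lambda_{k-1}(x_k-x_{k-1})$ into two sums and shift the index in the second, $\sum_{k=1}^n\lambda_{k-1}x_{k-1}=\sum_{j=0}^{n-1}\lambda_j x_j$; after recombining, the telescoped expression is exactly $\lambda_{n-1}x_n-\lambda_0 x_0-\sum_{k=1}^{n-1}(\lambda_k-\lambda_{k-1})x_k$, matching the right-hand side. Dividing by $\lambda_n$ yields $(3.1)$.

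For the second identity I would work with the partial sums $T_n:=\lambda_n\Lambda_n(x)=\sum_{k=0}^n(\lambda_k-\lambda_{k-1})x_k$, which satisfy the one-step recursion $T_n=T_{n-1}+(\lambda_n-\lambda_{n-1})x_n$. Writing $\Lambda_n(x)-\Lambda_{n-1}(x)=T_n/\lambda_n-T_{n-1}/\lambda_{n-1}$ and substituting the recursion, the $T_{n-1}$ contributions combine over the common denominator $\lambda_n\lambda_{n-1}$ to give $\Lambda_n(x)-\Lambda_{n-1}(x)=\frac{\lambda_n-\lambda_{n-1}}{\lambda_n}\bigl[x_n-\Lambda_{n-1}(x)\bigr]$. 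Multiplying by $\lambda_{n-1}/(\lambda_n-\lambda_{n-1})$ then reduces the claim to $x_n-\Lambda_n(x)=\frac{\lambda_{n-1}}{\lambda_n}\bigl[x_n-\Lambda_{n-1}(x)\bigr]$, which follows from the same recursion for $T_n$; combined with $(3.1)$ this closes the argument.

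The main obstacle, such as it is, is purely the index bookkeeping: keeping the boundary term at $k=0$ consistent under the convention $\lambda_{-1}=0$, and correctly shifting the summation index when splitting $\sum_{k=1}^n\lambda_{k-1}(x_k-x_{k-1})$, since an off-by-one error there would spoil the telescoping and break the match between the two expressions. Once the index shift is handled carefully the rest is routine cancellation.
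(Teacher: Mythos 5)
Your proof is correct, but there is nothing in the paper to compare it against: the authors state this lemma without proof, importing it verbatim from Mursaleen and Noman \cite{MursaleenNoman}. Your verification of $(3.1)$ --- clearing the denominator $\lambda_n$, splitting off the top term $(\lambda_n-\lambda_{n-1})x_n$, and matching the telescoped form $\lambda_{n-1}x_n-\lambda_0x_0-\sum_{k=1}^{n-1}(\lambda_k-\lambda_{k-1})x_k$ obtained from the index shift in $\sum_{k=1}^n\lambda_{k-1}(x_k-x_{k-1})$ --- is sound (it is in essence the Abel summation-by-parts identity underlying the cited source), and your treatment of the second equality via the partial sums $T_n=\lambda_n\Lambda_n(x)$ and the recursion $T_n=T_{n-1}+(\lambda_n-\lambda_{n-1})x_n$ is a clean way to reduce it to the already-proved $(3.1)$: indeed $\frac{\lambda_{n-1}}{\lambda_n-\lambda_{n-1}}\left[\Lambda_n(x)-\Lambda_{n-1}(x)\right]=\frac{\lambda_{n-1}}{\lambda_n}\left[x_n-\Lambda_{n-1}(x)\right]=x_n-\Lambda_n(x)$, both steps following from the same recursion. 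Your explicit attention to the convention $\lambda_{-1}=0$ is also the right instinct, since without it the $k=0$ boundary term would not close the telescoping; the only cosmetic remark is that the second identity should be read for $n\geq 1$ (for $n=0$ both sides are $0$ by the definition $S_0(x)=0$), a boundary case your argument implicitly respects. In short, you have supplied a complete, self-contained proof of a statement the paper leaves entirely to its reference.
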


\begin{theorem}
The inclusion $\ell \left( \lambda ,p\right) \subset $ $c_{0}\left( \lambda
,p\right) $ strictly holds.
\end{theorem}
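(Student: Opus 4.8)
The plan is to reduce the statement to the elementary facts about Maddox's sequence spaces that $\ell(p)\subset c_{0}(p)$ and that this inclusion is proper, and then to transport them to the $\lambda$-spaces through the $\Lambda$-transform $y=\Lambda(x)$ of $(2.2)$ together with the inversion formula obtained in Theorem 2.2. Recall that $c_{0}\left(\lambda,p\right)$ consists of those $x\in w$ with $\left\vert \Lambda_{n}(x)\right\vert^{p_{n}}\to 0$, i.e.\ with $\Lambda(x)\in c_{0}(p)$.

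First I would settle the inclusion itself, which is essentially immediate. Let $x\in\ell\left(\lambda,p\right)$. By definition $\sum_{n=0}^{\infty}\left\vert \Lambda_{n}(x)\right\vert^{p_{n}}<\infty$, and since the general term of a convergent series tends to zero we obtain $\left\vert \Lambda_{n}(x)\right\vert^{p_{n}}\to 0$ as $n\to\infty$. This is precisely the membership condition $\Lambda(x)\in c_{0}(p)$, that is, $x\in c_{0}\left(\lambda,p\right)$. Hence $\ell\left(\lambda,p\right)\subset c_{0}\left(\lambda,p\right)$ with no further computation.

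The substantive part is the strictness, for which I would exhibit an explicit witness lying in $c_{0}\left(\lambda,p\right)$ but not in $\ell\left(\lambda,p\right)$. I would build it on the $\Lambda$-side: define $y=(y_{n})$ by $y_{n}=(n+1)^{-1/p_{n}}$, so that $\left\vert y_{n}\right\vert^{p_{n}}=\frac{1}{n+1}$. Then $\left\vert y_{n}\right\vert^{p_{n}}\to 0$ shows $y\in c_{0}(p)$, while $\sum_{n}\frac{1}{n+1}=\infty$ shows $y\notin\ell(p)$. Now define $x$ from $y$ by the inversion formula of Theorem 2.2, namely $x_{n}=\sum_{k=n-1}^{n}(-1)^{n-k}\frac{\lambda_{k}}{\lambda_{n}-\lambda_{n-1}}\,y_{k}$, which returns $\Lambda(x)=y$. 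By the definitions of the two spaces this gives $x\in c_{0}\left(\lambda,p\right)\setminus\ell\left(\lambda,p\right)$, so the inclusion is strict.

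The only points needing a little care are bookkeeping rather than genuine obstacles. One must observe that $y_{n}=(n+1)^{-1/p_{n}}$ is well defined because each $p_{n}>0$, and that the single choice $\left\vert y_{n}\right\vert^{p_{n}}=\frac{1}{n+1}$ works simultaneously for every admissible exponent sequence $(p_{k})$, so that no case distinction on $(p_{k})$ is required. The hardest, though still routine, step is simply confirming that the inversion formula indeed yields $\Lambda(x)=y$, which is exactly the bijectivity already established in the proof of Theorem 2.2; once this is granted, membership of $x$ in $c_{0}\left(\lambda,p\right)$ and its failure to lie in $\ell\left(\lambda,p\right)$ follow directly from the definitions.
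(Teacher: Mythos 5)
Your proof is correct, and its first half (the inclusion itself) is the same as the paper's: the general term of the convergent series $\sum_{n}\left\vert \Lambda_{n}(x)\right\vert^{p_{n}}$ tends to zero (the paper's ``$\Lambda_{n}x\rightarrow\infty$'' is evidently a typo for ``$\rightarrow 0$''). For the strictness, however, you take a genuinely different route. The paper builds its witness directly in the original space: it fixes the \emph{specific} exponents $p_{k}=1+\frac{1}{k+1}$ and a concrete sequence built from $x_{n}=\frac{1}{n+1}$, invokes the inclusion $c_{0}(p)\subset c_{0}(\lambda,p)$ from the cited Karakaya--Noman--Polat paper to place it in $c_{0}(\lambda,p)$, and then proves the lower bound $\left\vert \Lambda_{n}(x)\right\vert \geq (n+1)^{-1/p_{n}}$ to conclude $\Lambda x\notin \ell(p)$. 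You instead construct the witness on the transform side, $y_{n}=(n+1)^{-1/p_{n}}$, where membership in $c_{0}(p)\setminus \ell(p)$ is immediate (the exponentiated terms form the harmonic series), and pull it back through the inversion formula $x_{n}=\frac{\lambda_{n}y_{n}-\lambda_{n-1}y_{n-1}}{\lambda_{n}-\lambda_{n-1}}$ established in the proof of Theorem 2, so that both membership claims for $x$ follow purely from the definitions, with no estimate on $\Lambda_{n}(x)$ at all. Your route buys two things: it works uniformly for \emph{every} admissible exponent sequence $(p_{k})$, whereas the paper's example, read literally, ties the argument to one particular choice of $(p_{k})$ and therefore only certifies strictness for that choice; and it cleanly reduces the strict inclusion $\ell(\lambda,p)\subsetneq c_{0}(\lambda,p)$ to the classical strict inclusion $\ell(p)\subsetneq c_{0}(p)$, at the modest price of relying on the bijectivity of the $\Lambda$-transform, which Theorem 2 has already supplied. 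In this instance your argument is not only different but tighter than the one in the paper.
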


\begin{proof}
Let $x\in $ $\ell \left( \lambda ,p\right) .$ It can be written $\Lambda
x\in \ell \left( p\right) .$ By the definition of the space $\ell \left(
p\right) ,$ $\Lambda _{n}x\rightarrow \infty $ as $n\rightarrow \infty ,$ we
obtain $\Lambda x\in c_{0}.$ Hence we get $x\in c_{0}\left( \lambda
,p\right) .$

To show strict of the inclusion, by taking $x_{n}^{^{\prime }}=\frac{1}{n+1}%
, $ $p_{k}=1+\frac{1}{n+1},$ we consider the sequence $|x|^{p}={%
(|x_{k}|^{p_{k}})}_{k=0}^{\infty }.$ Then it is easy to see that $\Lambda
\left( |x|^{p}\right) \in c_{0}\left( p\right) .$ Since $c_{0}\left(
p\right) \subset $ $c_{0}\left( \lambda ,p\right) ,$ $x\in c_{0}\left(
\lambda ,p\right) $ $\left( \text{see;\cite{PKarakayaNH}}\right) .$ Hence%
\begin{equation*}
\left\vert \Lambda _{n}\left( x\right) \right\vert \geq \frac{1}{\left(
n+1\right) ^{\frac{1}{1+\frac{1}{n+1}}}} .
\end{equation*}%
This shows that $\Lambda x\notin \ell \left( p\right) $ and hence $x\notin
\ell \left( \lambda ,p\right) .$ Thus the sequence $x$ is in $c_{0}\left(
\lambda ,p\right) $ but not in $\ell \left( \lambda ,p\right) .$
\end{proof}

\begin{theorem}
The inclusion $\ell \left( \lambda ,p\right) \subset \ell \left( p\right) $
if and only if $S\left( x\right) \in $ $\ell \left( p\right) $ for every
sequence $x\in \ell \left( \lambda ,p\right) ;$ where $1\leq p_{k}\leq H.$
\end{theorem}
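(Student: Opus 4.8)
The plan is to base the entire argument on the coordinatewise identity $S_n(x) = x_n - \Lambda_n(x)$ supplied by the preceding Lemma, combined with the fact that $\ell(p)$ is a linear space on which the functional $z \mapsto \left(\sum_n |z_n|^{p_n}\right)^{1/M}$ satisfies a triangle inequality of the same form as (2.3). The pivotal observation is that, by the very definition of our space, $x \in \ell(\lambda, p)$ means exactly $\Lambda(x) \in \ell(p)$; hence for every $x$ in $\ell(\lambda, p)$ the sequence $\Lambda(x)$ already lies in $\ell(p)$, and both $x$ and $S(x)$ are tied to it through the single decomposition $x = \Lambda(x) + S(x)$.

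First I would record the additive closure of $\ell(p)$ that the argument needs: if $a, b \in \ell(p)$, then the inequality $\left(\sum_n |a_n \pm b_n|^{p_n}\right)^{1/M} \le \left(\sum_n |a_n|^{p_n}\right)^{1/M} + \left(\sum_n |b_n|^{p_n}\right)^{1/M} < \infty$ shows that $a \pm b \in \ell(p)$, which is where the stated range $1 \le p_k \le H$ is used.

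For necessity, I would assume the inclusion $\ell(\lambda, p) \subset \ell(p)$ and fix an arbitrary $x \in \ell(\lambda, p)$. Then simultaneously $\Lambda(x) \in \ell(p)$ (membership in our space) and $x \in \ell(p)$ (the assumed inclusion); so applying the Lemma coordinatewise as $S_n(x) = x_n - \Lambda_n(x)$ together with the additive closure yields $S(x) \in \ell(p)$. For sufficiency, I would assume $S(x) \in \ell(p)$ for every $x \in \ell(\lambda, p)$ and again fix such an $x$; once more $\Lambda(x) \in \ell(p)$, and writing $x_n = \Lambda_n(x) + S_n(x)$ the additive closure forces $x \in \ell(p)$, which is precisely the desired inclusion.

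I do not expect a genuine obstacle here: the content is carried entirely by the Lemma's identity and the linearity of $\ell(p)$. The only point demanding care is the bookkeeping, namely to unpack $x \in \ell(\lambda, p)$ as $\Lambda(x) \in \ell(p)$ before invoking the decomposition, and to keep track of which of the three sequences $x$, $\Lambda(x)$, $S(x)$ is known to lie in $\ell(p)$ at each stage, since the two directions differ only in which two of the three are taken as given.
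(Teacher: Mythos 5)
Your proposal is correct and coincides in essence with the paper's own proof: both arguments rest on the Lemma's identity $S_{n}\left( x\right) =x_{n}-\Lambda _{n}\left( x\right) $, the observation that $x\in \ell \left( \lambda ,p\right) $ means precisely $\Lambda \left( x\right) \in \ell \left( p\right) $, and the additive closure of $\ell \left( p\right) $, which the paper phrases as the triangle inequality $\left[ h\left( S\left( x\right) \right) \right] _{\ell \left( p\right) }\leq \left[ h\left( x\right) \right] _{\ell \left( p\right) }+\left[ h\left( \Lambda x\right) \right] _{\ell \left( p\right) }$ (and its counterpart for $x$) rather than as a set-theoretic closure statement. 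The only difference is presentational, so there is nothing to add.
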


\begin{proof}
We suppose that $\ell \left( \lambda ,p\right) \subset \ell \left( p\right) $
holds and take any $x\in \ell \left( \lambda ,p\right) .$ Then $x\in \ell
\left( p\right) $ by hypothesis. Thus we obtain from $\left( 3.1\right) $
that%
\begin{equation*}
\left[ h\left( S\left( x\right) \right) \right] _{\ell \left( p\right) }\leq %
\left[ h\left( x\right) \right] _{\ell \left( p\right) }+\left[ h\left(
\Lambda x\right) \right] _{\ell \left( p\right) }=\left[ h\left( x\right) %
\right] _{\ell \left( p\right) }+\left[ h\left( x\right) \right] _{\ell
\left( \lambda ,p\right) }
\end{equation*}%
which yields that $S\left( x\right) \in \ell \left( p\right) .$

Conversely, let $x\in \ell \left( \lambda ,p\right) $ be given. Then we have
by the hypothesis that $S\left( x\right) \in \ell \left( p\right) .$ Again
by using $\left( 3.1\right) $%
\begin{equation*}
\left[ h\left( x\right) \right] _{\ell \left( p\right) }\leq \left[ h\left(
S\left( x\right) \right) \right] _{\ell \left( p\right) }+\left[ h\left(
\Lambda x\right) \right] _{\ell \left( p\right) }=\left[ h\left( S\left(
x\right) \right) \right] _{\ell \left( p\right) }+\left[ h\left( x\right) %
\right] _{\ell \left( \lambda ,p\right) }
\end{equation*}%
which shows that $x\in \ell \left( p\right) .$ Hence the inclusion $\ell
\left( \lambda ,p\right) \subset \ell \left( p\right) $ holds. This
completes the proof.
\end{proof}

\begin{theorem}
$\left( i\right) $ If $p_{n}>1$ for all $n\in N$, then the inclusion $\ell
_{p}^{\lambda }\subset \ell \left( \lambda ,p\right) $ holds.

$\left( ii\right) $ If $p_{n}<1$ for all $n\in N$, then the inclusion $\ell
\left( \lambda ,p\right) \subset \ell _{p}^{\lambda }$ holds.
\end{theorem}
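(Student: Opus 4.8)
The plan is to transport both inclusions through the $\Lambda$-transform to the ordinary spaces $\ell\left(p\right)$ and $\ell_{p}$ and then to close them by a single pointwise comparison of powers; throughout, $p$ denotes the constant exponent of $\ell_{p}^{\lambda}$ (the value $1$ in the statement). By Theorem 2 the map $T:x\mapsto\Lambda x$ is a linear isomorphism of $\ell\left(\lambda,p\right)$ onto $\ell\left(p\right)$, and, exactly as there, $T$ identifies $\ell_{p}^{\lambda}$ with its base space $\ell_{p}$, since $x\in\ell_{p}^{\lambda}$ precisely when $\Lambda x\in\ell_{p}$. Hence $x\in\ell_{p}^{\lambda}\Longleftrightarrow\Lambda x\in\ell_{p}$ and $x\in\ell\left(\lambda,p\right)\Longleftrightarrow\Lambda x\in\ell\left(p\right)$, so under $T$ the two assertions become the scalar inclusions $\ell_{p}\subset\ell\left(p\right)$ in part $(i)$ and $\ell\left(p\right)\subset\ell_{p}$ in part $(ii)$. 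The whole $\lambda$-apparatus is thereby absorbed by $T$, leaving only the classical comparison between the constant exponent $p$ and the variable exponents $p_{n}$.

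For part $(i)$ I would take any $x\in\ell_{p}^{\lambda}$, set $y=\Lambda x$, and use that $y\in\ell_{p}$ forces $y_{n}\to0$; thus there is an index $N$ with $\left\vert y_{n}\right\vert\le1$ for all $n\ge N$. Since $t\mapsto t^{s}$ is nonincreasing in $s$ on $(0,1]$, the hypothesis $p_{n}>1$ yields $\left\vert y_{n}\right\vert^{p_{n}}\le\left\vert y_{n}\right\vert^{p}$ for every $n\ge N$, so $\sum_{n\ge N}\left\vert y_{n}\right\vert^{p_{n}}\le\sum_{n\ge N}\left\vert y_{n}\right\vert^{p}<\infty$; adjoining the finitely many initial terms gives $y\in\ell\left(p\right)$, i.e. $x\in\ell\left(\lambda,p\right)$. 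Part $(ii)$ is the exact mirror image: for $x\in\ell\left(\lambda,p\right)$ one has $y=\Lambda x\in\ell\left(p\right)$, whence $\left\vert y_{n}\right\vert^{p_{n}}\to0$ and again $y_{n}\to0$; now $p_{n}<1$ reverses the monotonicity to $\left\vert y_{n}\right\vert^{p}\le\left\vert y_{n}\right\vert^{p_{n}}$ for all large $n$, so $\sum_{n}\left\vert y_{n}\right\vert^{p}<\infty$ and $x\in\ell_{p}^{\lambda}$.

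The only genuinely delicate step is the passage from the tail estimate to convergence of the full series, and it rests entirely on $y_{n}\to0$: this is what secures $\left\vert y_{n}\right\vert\le1$ from some index onward and lets the one-sided power inequality act in the correct direction. I would be careful to note that this inequality may fail on the finitely many indices with $\left\vert y_{n}\right\vert>1$, but that those terms contribute only a finite amount and so cannot affect summability. Everything else—the linearity and the reduction through $T$—is immediate from Theorem 2, so no separate completeness or paranorm estimates are required.
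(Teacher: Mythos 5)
Your proposal is correct and follows essentially the same route as the paper: pass to $y=\Lambda x$, use that the terms are eventually less than $1$ in modulus, and compare $\left\vert y_{n}\right\vert^{p_{n}}$ with $\left\vert y_{n}\right\vert$ termwise via monotonicity of $t\mapsto t^{s}$ on $(0,1]$. Your version is in fact slightly more careful than the paper's, since you explicitly account for the finitely many initial indices where the pointwise inequality may fail.
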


\begin{proof}
$\left( i\right) $ Let $x\in \ell _{p}^{\lambda }.$ It is clear that $%
\Lambda \left( x\right) \in \ell _{p}.$ One can find $m\in N$ such that $%
\left\vert \Lambda _{n}\left( x\right) \right\vert <1$ for all $n\geq m.$
Under the condition $\left( i\right) ,$ we have $\left\vert \Lambda
_{n}\left( x\right) \right\vert ^{p_{n}}<\left\vert \Lambda _{n}\left(
x\right) \right\vert $ for all $n\geq m.$ Hence we get $x\in \ell \left(
\lambda ,p\right) .$

$\left( ii\right) $ We suppose that $x\in \ell \left( \lambda ,p\right) .$
Then $\ \Lambda \left( x\right) \in \ell \left( p\right) $ and there exists $%
m\in N$ such that $\left\vert \Lambda _{n}\left( x\right) \right\vert
^{p_{n}}<1$ for all $n\geq m.$ To obtain the result, we consider the
following inequality;%
\begin{equation*}
\left\vert \Lambda _{n}\left( x\right) \right\vert =\left( \left\vert
\Lambda _{n}\left( x\right) \right\vert ^{p_{n}}\right) ^{\frac{1}{p_{n}}%
}<\left\vert \Lambda _{n}\left( x\right) \right\vert ^{p_{n}}
\end{equation*}%
for all $n\geq m.$ So, we get $x\in \ell _{p}^{\lambda }.$
\end{proof}

\section{\textbf{Some matrix transformations and duals of the space }$\ell
\left( \protect\lambda ,p\right) $}

In this section, we give the theorems determining the $\alpha -,\beta -$ and
$\gamma -$ duals of the space $\ell \left( \lambda ,p\right) $. In proving
the theorem, we apply the technique used in \cite{PBasarAltay1}. Also we
give some matrix transformations from the space $\ell \left( \lambda
,p\right) $ into paranormed spaces $\ell \left( q\right) $ by using the
matrix given in \cite{MursaleenNoman}.

For the sequence space $\mu $ and $\nu $, the set $S\left( \mu ,\nu \right) $%
\ defined by
\begin{equation*}
S\left( \mu ,\nu \right) =\left\{ a=\left( a_{k}\right) \in w:ax\in \nu
\text{ for all }x\in \mu \right\}
\end{equation*}%
is called the multiplier space of $\mu $ and $\nu $. The $\alpha -,\beta -$
and $\gamma -$duals of a sequence space $\mu ,$ which are respectively
denote by $\mu ^{\alpha }$, $\mu ^{\beta }$ and $\mu ^{\gamma }$ are defined
by
\begin{equation*}
\mu ^{\alpha }=S\left( \mu ,\ell _{1}\right) ,\text{ \ \ \ \ \ \ \ \ \ \ \ \
\ \ \ \ }\mu ^{\beta }=S\left( \mu ,cs\right) ,\text{ \ \ \ \ \ \ \ \ \ \ \
\ }\mu ^{\gamma }=S\left( \mu ,bs\right) .
\end{equation*}%
We may begin with the following theorem which computes the $\alpha $-dual of
the space $\ell \left( \lambda ,p\right) $.

\begin{theorem}
Let $K_{1}=\left\{ k\in \mathbb{N}:p_{k}\leq 1\right\} $ and $\
K_{2}=\left\{ k\in \mathbb{N}:p_{k}>1\right\} $. Define the matrix $%
D^{a}=\left( d_{nk}^{a}\right) $ by%
\begin{equation}
d_{nk}^{a}=\left\{
\begin{array}{cc}
\left( -1\right) ^{n-k}\frac{\lambda _{k}}{\lambda _{n}-\lambda _{n-1}}a_{n}
& ,\left( n-1\leq k\leq n\right) \\
0 & ,\left( 0\leq k\leq n-1\right) \text{ or }\left( k>n\right)%
\end{array}%
\right. .  \label{4.1}
\end{equation}%
Then
\begin{equation*}
\ell _{K_{1}}^{\alpha }\left( \lambda ,p\right) =\left\{ a=\left(
a_{n}\right) \in w:D^{a}\in \left( \ell \left( p\right) ;\ell _{\infty
}\right) \right\}
\end{equation*}%
\begin{equation*}
\ell _{K_{2}}^{\alpha }\left( \lambda ,p\right) =\left\{ a=\left(
a_{n}\right) \in w:D^{a}\in \left( \ell \left( p\right) ;\ell _{1}\right)
\right\} .
\end{equation*}
\end{theorem}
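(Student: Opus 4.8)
The plan is to transfer the problem to the space $\ell(p)$ through the isomorphism $T$ of Theorem 2 and then read off the answer from the known matrix-transformation characterizations of the Maddox class $\ell(p)$. First I would recall that, by Theorem 2, a sequence $x=(x_k)\in\ell(\lambda,p)$ if and only if its $\Lambda$-transform $y=\Lambda(x)$ lies in $\ell(p)$, and that the inverse relation reads
\begin{equation*}
x_n=\frac{\lambda_n}{\lambda_n-\lambda_{n-1}}\,y_n-\frac{\lambda_{n-1}}{\lambda_n-\lambda_{n-1}}\,y_{n-1}\qquad(n\in\mathbb{N}),
\end{equation*}
with the convention $y_{-1}=0$. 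Multiplying by $a_n$ and comparing with $(4.1)$, the crucial observation is that the product sequence $ax=(a_nx_n)$ is precisely the $D^{a}$-transform of $y$; that is,
\begin{equation*}
a_nx_n=\sum_{k=n-1}^{n}d_{nk}^{a}\,y_k=D_n^{a}(y)\qquad(n\in\mathbb{N}).
\end{equation*}

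Next I would use the definition of the $\alpha$-dual. By construction $a\in\ell(\lambda,p)^{\alpha}$ means $ax\in\ell_1$ for every $x\in\ell(\lambda,p)$, and by the displayed identity this is the same as requiring $D^{a}(y)\in\ell_1$ for every $y\in\ell(p)$, i.e. $D^{a}\in(\ell(p),\ell_1)$. To obtain the two stated forms I would invoke the elementary fact that, for a real sequence $(c_n)$, one has $\sum_n|c_n|<\infty$ if and only if $\sup_{N\in\digamma}\big|\sum_{n\in N}c_n\big|<\infty$ (the supremum over finite subsets). Applying this with $c_n=a_nx_n$ and interchanging the finite summations shows that $a\in\ell(\lambda,p)^{\alpha}$ is equivalent to
\begin{equation*}
\sup_{N\in\digamma}\left|\sum_{k}\left(\sum_{n\in N}d_{nk}^{a}\right)y_k\right|<\infty\qquad\text{for all }y\in\ell(p).
\end{equation*}
Because $D^{a}$ is two-banded (for each fixed $k$ only the entries with $n\in\{k,k+1\}$ are nonzero), the inner sum over $n\in N$ collapses to at most two terms, so this aggregated condition is controlled by the entries of $D^{a}$ themselves.

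The final step is to split according to the index sets $K_1=\{k:p_k\le1\}$ and $K_2=\{k:p_k>1\}$ and to feed the reformulation above into the classical Maddox--Grosse-Erdmann characterizations of the matrix classes $(\ell(p),\ell_\infty)$ and $(\ell(p),\ell_1)$ (the technique announced for this section, following \cite{PBasarAltay1}). In the regime $p_k\le1$ the dual of $\ell(p)$ behaves like an $\ell_\infty$-type space, and the boundedness condition above—thanks to the banded structure—reduces to the requirement that $D^{a}$ map $\ell(p)$ into $\ell_\infty$, which yields the description of $\ell_{K_1}^{\alpha}(\lambda,p)$. In the regime $p_k>1$ the conjugate-exponent (Hölder) estimate forces genuine absolute summability of $D^{a}(y)$, giving $D^{a}\in(\ell(p),\ell_1)$ and hence the description of $\ell_{K_2}^{\alpha}(\lambda,p)$.

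I expect the main obstacle to be this last step: correctly stating and applying the two-sided matrix-transformation criteria for $(\ell(p),\ell_\infty)$ and $(\ell(p),\ell_1)$, which genuinely bifurcate at $p_k=1$, and verifying that passing from the aggregated matrix $N\mapsto\sum_{n\in N}d_{nk}^{a}$ back to $D^{a}$ itself is legitimate in each regime. By contrast, the isomorphism reduction and the identity $a_nx_n=D_n^{a}(y)$ are routine; all the analytic content sits in matching the supremum and summability conditions to the known characterizations and in handling the bounded parameter sequence $(p_k)$ together with the normalisation $M=\max(1,H)$.
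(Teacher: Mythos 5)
Your proposal is correct and takes essentially the same route as the paper: the paper's proof likewise rests on the identity $a_{n}x_{n}=\sum_{k=n-1}^{n}d_{nk}^{a}y_{k}=\left( D^{a}y\right) _{n}$ and then transfers the $\alpha$-dual condition through the isomorphism $x\leftrightarrow y=\Lambda (x)$ of Theorem 2, concluding that $a$ lies in the dual exactly when $D^{a}$ lies in the corresponding matrix class. The only difference is that you go further than the paper in sketching why the $K_{1}$ case ($p_{k}\leq 1$) may be stated with $\left( \ell \left( p\right) ,\ell _{\infty }\right) $ in place of $\left( \ell \left( p\right) ,\ell _{1}\right) $ via the two-banded structure of $D^{a}$ and the Grosse-Erdmann conditions--a point the paper asserts without justification, deferring the explicit conditions to its Corollary 1.
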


\begin{proof}
We consider the following equality%
\begin{equation}
a_{n}x_{n}=\tsum\limits_{k=n-1}^{n}d_{nk}^{a}y_{k}=\left( D^{a}y\right) _{n}%
\text{ \ }\left( n\in \mathbb{N}\right)  \label{4.2}
\end{equation}%
where $D^{a}=\left( d_{nk}^{a}\right) $ is defined by $\left( 4.1\right) .$

From $\left( 4.2\right) $, it can be obtained \ that $ax=\left(
a_{n}x_{n}\right) \in \ell _{1}$ or $ax=\left( a_{n}x_{n}\right) \in \ell
_{\infty }$ whenever $x\in \ell \left( \lambda ,p\right) $ if and only if $%
D^{a}y\in \ell _{1}$ or $D^{a}y\in \ell _{\infty }$ whenever $y\in \ell
\left( p\right).$ This means $a\in \ell _{K_{1}}^{\alpha }\left( \lambda
,p\right) $ or $a\in \ell _{K_{2}}^{\alpha }\left( \lambda ,p\right) $ if
and only if $D^{a}\in \left( \ell \left( p\right) ;\ell _{1}\right) $ or $%
D^{a}\in \left( \ell \left( p\right) ;\ell _{\infty }\right) .$ Hence this
completes the proof.
\end{proof}

The result of the Theorem above corresponds the Theorem 5.1 $\left(
0,8,12\right) $ given in \cite{Gro-Erd1993}.

As a direct consequence of the Theorem 6, we have the following.

\begin{corollary}
Let $K^{\ast }=\left\{ k\in \mathbb{N}:n-1\leq k\leq n\right\} \cap K$ for $%
K\in \digamma $. Then

$\left( i\right) $ $\ell _{K_{1}}^{\alpha }\left( \lambda ,p\right) =\left\{
a=\left( a_{n}\right) \in w:\sup_{N}\sup_{k\in \mathbb{N}}\left\vert
\tsum\limits_{n\in K^{\ast }}d_{nk}^{a}\right\vert ^{p_{k}}<\infty \right\}
; $
\end{corollary}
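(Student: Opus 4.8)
The plan is to obtain the corollary as an immediate specialization of Theorem 6, so essentially no new analysis is needed beyond inserting an explicit matrix-class characterization. By Theorem 6, a sequence $a=(a_n)$ belongs to $\ell_{K_1}^{\alpha}(\lambda,p)$ exactly when the associated band matrix $D^a=(d_{nk}^a)$ of $(4.1)$ maps $\ell(p)$ into $\ell_1$ (this is the matrix class governing the $\alpha$-dual), and we are in the exponent regime $0<p_k\le 1$ that defines $K_1$. Thus the task reduces to writing down, for this range of the $p_k$, the known necessary and sufficient condition for $D^a\in(\ell(p),\ell_1)$. I would invoke the characterization quoted from Grosse-Erdmann immediately after Theorem 6, namely that $D=(d_{nk})\in(\ell(p),\ell_1)$ if and only if $\sup_{K\in\digamma}\sup_{k}\left|\sum_{n\in K}d_{nk}\right|^{p_k}<\infty$, rather than reproving it here. (I read the symbol $\sup_N$ in the statement as the supremum over all finite sets $K\in\digamma$.)

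The only genuine step is to simplify the inner column sums for the specific matrix $D^a$. Since $(4.1)$ forces $d_{nk}^a=0$ unless $n-1\le k\le n$, i.e. unless $n\in\{k,k+1\}$, each column $k$ of $D^a$ carries at most the two nonzero entries $d_{kk}^a$ and $d_{k+1,k}^a$. Hence, for every finite $K\in\digamma$ the unrestricted column sum $\sum_{n\in K}d_{nk}^a$ already equals the restricted sum over $K^{\ast}=\{n\in K:n-1\le k\le n\}$, which is precisely the index set introduced in the statement. Substituting this identity into the characterization above replaces $\left|\sum_{n\in K}d_{nk}^a\right|^{p_k}$ by $\left|\sum_{n\in K^{\ast}}d_{nk}^a\right|^{p_k}$, and the displayed description of $\ell_{K_1}^{\alpha}(\lambda,p)$ falls out.

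I do not anticipate a real obstacle, because all of the analytic weight is carried by Theorem 6 and by the cited matrix-transformation theorem; the corollary is a bookkeeping consequence, exactly as the phrase ``direct consequence of the Theorem 6'' suggests. The two points that merit care are: first, making sure one uses the version of the characterization valid for $0<p_k\le 1$ (the condition defining $K_1$) rather than the $p_k>1$ version relevant to $K_2$; and second, verifying that the support of each column of $D^a$ is exactly $\{k,k+1\}$, so that the passage from $K$ to $K^{\ast}$ drops no nonzero term and introduces none. Both are routine once the band pattern of $(4.1)$ is made explicit.
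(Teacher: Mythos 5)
Your proposal is correct, and it is essentially the argument the paper intends: the paper gives no proof at all, merely asserting the corollary as a direct consequence of Theorem 6, with the Grosse-Erdmann characterization (Theorem 5.1 of \cite{Gro-Erd1993}, cited immediately after Theorem 6) supplying the matrix-class test and the band structure of $D^{a}$ doing the bookkeeping, exactly as in your write-up. The one substantive point of comparison is that your route quietly corrects an internal inconsistency in the paper. You take Theorem 6 to say that $a\in \ell _{K_{1}}^{\alpha }\left( \lambda ,p\right) $ if and only if $D^{a}\in \left( \ell \left( p\right) ,\ell _{1}\right) $; but the literal statement of Theorem 6 pairs $K_{1}$ with $\left( \ell \left( p\right) ,\ell _{\infty }\right) $ and $K_{2}$ with $\left( \ell \left( p\right) ,\ell _{1}\right) $, while its proof pairs them the opposite way. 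Your reading is the mathematically correct one: the $\alpha $-dual concerns $ax\in \ell _{1}$, so the governing class is $\left( \ell \left( p\right) ,\ell _{1}\right) $ in both exponent regimes, and only this reading yields the displayed condition, since Grosse-Erdmann's test for $\left( \ell \left( p\right) ,\ell _{1}\right) $ with $p_{k}\leq 1$ is $\sup_{N}\sup_{k}\left\vert \sum_{n\in N}d_{nk}^{a}\right\vert ^{p_{k}}<\infty $ (supremum over finite row sets $N$), whereas his test for $\left( \ell \left( p\right) ,\ell _{\infty }\right) $ with $p_{k}\leq 1$ is $\sup_{n,k}\left\vert d_{nk}^{a}\right\vert ^{p_{k}}<\infty $, which is not what the corollary asserts. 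Your remaining step --- that column $k$ of $D^{a}$ is supported on the rows $n\in \left\{ k,k+1\right\} $ by $(4.1)$, so that $\sum_{n\in N}d_{nk}^{a}=\sum_{n\in K^{\ast }}d_{nk}^{a}$ and the passage from $N$ to $K^{\ast }$ neither drops nor adds any nonzero term --- is verified correctly, and it also explains why the distinction between suprema over finite sets and over arbitrary subsets of $\mathbb{N}$ is immaterial here. So the proposal stands; it would only be strengthened by stating explicitly that one is using the $\left( \ell \left( p\right) ,\ell _{1}\right) $ version of Theorem 6, i.e., the pairing appearing in its proof rather than in its displayed statement.
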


$\ \ \ \ \ \left( ii\right) $ $\ell _{K_{2}}^{\alpha }\left( \lambda
,p\right) =\bigcup\limits_{M>1}\left\{ a=\left( a_{n}\right) \in
w:\sup_{K\in \digamma }\sum\limits_{k}\left\vert \tsum\limits_{n\in K^{\ast
}}d_{nk}^{a}M^{-1}\right\vert ^{p_{k}^{!}}<\infty \right\} $

In the following theorem, we characterize $\ $the $\beta -$ and $\gamma -$
duals of the space $\ell \left( \lambda ,p\right)$.

\begin{theorem}
Let $K_{1}=\left\{ k\in \mathbb{N}:p_{k}\leq 1\right\} $, $K_{2}=\left\{
k\in \mathbb{N}:p_{k}>1\right\} ,$ and let $\Delta x_{k}=x_{k}-x_{k+1}$.
Define the sequence $s^{1}=\left( s_{k}^{1}\right) ,$ $s^{2}=\left(
s_{k}^{2}\right) $ and the matrix $B^{a}=\left( b_{nk}^{a}\right) $ by

$s_{k}^{1}=\Delta \left( \frac{a_{k}}{\lambda _{k}-\lambda _{k-1}}\right)
\lambda _{k},$ \ \ \ \ \ \ \ \ \ \ \ \ \ \ \ \ $s_{k}^{2}=\frac{a_{k}\lambda
_{k}}{\lambda _{k}-\lambda _{k-1}}$%
\begin{equation*}
b_{nk}^{a}=\left\{
\begin{array}{cc}
s_{k}^{1} & ,\left( 0\leq k\leq n-1\right) \\
s_{k}^{2} & ,\left( k=n\right) \\
0 & ,\left( k>n\right)%
\end{array}%
\right. .
\end{equation*}%
for all $n,k\in \mathbb{N}.$ Then
\begin{equation}
\ell _{K_{1}}^{\beta }\left( \lambda ,p\right) =\ell _{K_{1}}^{\gamma
}\left( \lambda ,p\right) =\left\{ a=\left( a_{n}\right) \in w:B^{a}\in
\left( \ell \left( p\right) ;\ell _{\infty }\right) \right\} ;  \label{4.3}
\end{equation}%
and
\begin{equation*}
\ell _{K_{2}}^{\beta }\left( \lambda ,p\right) =\ell _{K_{2}}^{\gamma
}\left( \lambda ,p\right) =\left\{ a=\left( a_{n}\right) \in w:B^{a}\in
\left( \ell \left( p\right) ;c\right) \right\} .
\end{equation*}
\end{theorem}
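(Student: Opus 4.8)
The plan is to collapse the whole computation onto a single matrix identity that expresses the partial sums $\sum_{k=0}^{n}a_{k}x_{k}$ in terms of the $\Lambda$-transform $y=\Lambda(x)$, and then to read off the $\beta$- and $\gamma$-dual conditions directly from the known characterizations of the matrix classes $\left(\ell(p);\ell_{\infty}\right)$ and $\left(\ell(p);c\right)$. This is exactly the mechanism already used for the $\alpha$-dual in Theorem 6, transplanted from the class $\ell_{1}$/$\ell_{\infty}$ to $cs$/$bs$.

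First I would substitute the inverse transform supplied by the proof of Theorem 2, namely $x_{k}=\dfrac{\lambda_{k}y_{k}-\lambda_{k-1}y_{k-1}}{\lambda_{k}-\lambda_{k-1}}$ (with the standing convention $\lambda_{-1}=0$). Setting $c_{k}=\dfrac{a_{k}}{\lambda_{k}-\lambda_{k-1}}$, the sum becomes $\sum_{k=0}^{n}a_{k}x_{k}=\sum_{k=0}^{n}c_{k}\left(\lambda_{k}y_{k}-\lambda_{k-1}y_{k-1}\right)$, and I would apply Abel's summation by parts: reindexing the $\lambda_{k-1}y_{k-1}$ contribution kills the boundary term at $k=0$ (this is where $\lambda_{-1}=0$ is used), and collecting the coefficient of each $y_{k}$ produces $(c_{k}-c_{k+1})\lambda_{k}=\Delta\left(\frac{a_{k}}{\lambda_{k}-\lambda_{k-1}}\right)\lambda_{k}=s_{k}^{1}$ for $0\le k\le n-1$ and $c_{n}\lambda_{n}=\frac{a_{n}\lambda_{n}}{\lambda_{n}-\lambda_{n-1}}=s_{n}^{2}$ for $k=n$. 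Thus
$$\sum_{k=0}^{n}a_{k}x_{k}=\left(B^{a}y\right)_{n}\qquad (n\in\mathbb{N}),$$
with $B^{a}$ precisely the matrix in the statement. The isomorphism $T$ of Theorem 2 then does the rest: $x$ ranges over $\ell(\lambda,p)$ exactly when $y=\Lambda(x)$ ranges over $\ell(p)$. Hence $a$ lies in the $\gamma$-dual (i.e. $(a_{k}x_{k})\in bs$ for all $x$) iff $(B^{a}y)_{n}$ is bounded for all $y\in\ell(p)$, that is $B^{a}\in\left(\ell(p);\ell_{\infty}\right)$; and $a$ lies in the $\beta$-dual (i.e. $(a_{k}x_{k})\in cs$) iff $(B^{a}y)_{n}$ converges for all $y\in\ell(p)$, that is $B^{a}\in\left(\ell(p);c\right)$. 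The split into $K_{1}=\{p_{k}\le 1\}$ and $K_{2}=\{p_{k}>1\}$ is then forced by the two regimes of the standard matrix-class characterizations (cited via \cite{Gro-Erd1993} for Theorem 6).

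The step I expect to be the genuine obstacle is justifying the coincidence $\ell_{K_{i}}^{\beta}(\lambda,p)=\ell_{K_{i}}^{\gamma}(\lambda,p)$, since $a\mapsto B^{a}$ only gives $\ell^{\beta}\subseteq\ell^{\gamma}$ for free (convergent series are bounded). The structural fact that closes the gap is that every column of $B^{a}$ is eventually constant: $b_{nk}^{a}=s_{k}^{1}$ for all $n>k$, so $\lim_{n}b_{nk}^{a}=s_{k}^{1}$ exists automatically for each fixed $k$. Because the existence of these column limits is the only feature separating the characterization of $\left(\ell(p);c\right)$ from that of $\left(\ell(p);\ell_{\infty}\right)$, that condition is vacuous for matrices of the form $B^{a}$, and the residual boundedness requirements defining the two classes are identical in each regime. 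Consequently the $\beta$- and $\gamma$-duals agree, and the common value may be displayed in the $\ell_{\infty}$-form (as recorded for $K_{1}$) or the $c$-form (as recorded for $K_{2}$) interchangeably; the care required is in verifying that the boundedness condition attached to $B^{a}$ is indeed the same one appearing in both characterizations for the given range of $p_{k}$.
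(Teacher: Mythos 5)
Your proposal is correct and follows essentially the same route as the paper: the paper's proof consists precisely of the identity $\sum_{k=0}^{n}a_{k}x_{k}=\sum_{k=0}^{n-1}s_{k}^{1}y_{k}+s_{n}^{2}y_{n}=\left( B^{a}y\right)_{n}$ (its equation (4.4), which you derive explicitly by Abel summation from the inverse transform), together with the transfer, via the isomorphism $x\mapsto y=\Lambda \left( x\right)$ of Theorem 2, of the conditions $ax\in cs$ (resp. $bs$) into $B^{a}\in \left( \ell \left( p\right) ;c\right)$ (resp. $B^{a}\in \left( \ell \left( p\right) ;\ell _{\infty }\right)$), combined with the Grosse-Erdmann characterizations. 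The only difference is that you additionally justify the coincidence of the $\beta$- and $\gamma$-duals through the eventually constant columns of $B^{a}$, a point the paper's proof asserts without comment.
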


\begin{proof}
Consider the equality%
\begin{equation}
\tsum\limits_{k=0}^{n}a_{k}x_{k}=\tsum%
\limits_{k=0}^{n-1}s_{k}^{1}y_{k}+s_{n}^{2}y_{n}=\left( B^{a}y\right) _{n}
\label{4.4}
\end{equation}%
From $\left( 4.4\right) $, it can be obtained \ that $ax=\left(
a_{n}x_{n}\right) \in cs$ or $bs$ whenever $x=\left( x_{n}\right) \in \ell
\left( \lambda ,p\right) $ if and only if $B^{a}y\in c$ or $\ell _{\infty }$
whenever $y=\left( y_{k}\right) \in \ell \left( p\right) .$ This means that $%
a=\left( a_{n}\right) \in \left\{ \ell _{K_{1}}^{\beta }\left( \lambda
,p\right) \text{ or }\ell _{K_{2}}^{\beta }\left( \lambda ,p\right) \right\}
$ or $a=\left( a_{n}\right) \in \left\{ \ell _{K_{1}}^{\gamma }\left(
\lambda ,p\right) \text{ or }\ell _{K_{2}}^{\gamma }\left( \lambda ,p\right)
\right\} $ if and only if $B^{a}\in \left( \ell \left( p\right) ;c\right) $
or $B^{a}\in \left( \ell \left( p\right) ;\ell _{\infty }\right) .$ Hence
this completes the proof.
\end{proof}

We can write the following corollary from \ the Theorem 7.

\begin{corollary}
Let $\grave{p}_{k}=\frac{p_{k}}{p_{k}-1}$ for $1<p_{k}<\infty $ and for all $%
k\in \mathbb{N}.$ Then

$\left( i\right) $ $\ell _{K_{1}}^{\beta }\left( \lambda ,p\right) =\ell
_{K_{1}}^{\gamma }\left( \lambda ,p\right) =\left\{ a=\left( a_{n}\right)
\in w:s^{1},s^{2}\in \ell _{\infty }\left( p\right) \right\} ;$

$\left( ii\right) $ $\ell _{K_{2}}^{\beta }\left( \lambda ,p\right) =\ell
_{K_{2}}^{\gamma }\left( \lambda ,p\right) =\bigcup\limits_{M>1}\left\{
a=\left( a_{n}\right) \in w:s^{1}M^{-1},s^{2}M^{-1}\in \ell \left(
p^{^{\prime }}\right) \cap \ell _{\infty }\left( p^{^{\prime }}\right)
\right\}$.
\end{corollary}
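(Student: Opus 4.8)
The plan is to obtain the Corollary directly from Theorem 7 by substituting the explicit shape of the triangular matrix $B^{a}=(b_{nk}^{a})$ into the classical characterizations of the matrix classes $(\ell(p);\ell_{\infty})$ and $(\ell(p);c)$ recorded by Grosse-Erdmann in \cite{Gro-Erd1993}, handling the two index sets $K_{1}=\{k:p_{k}\leq 1\}$ and $K_{2}=\{k:p_{k}>1\}$ separately. The only structural features of $B^{a}$ that I will use are that it is lower triangular, that its $k$-th column is eventually constant with value $s_{k}^{1}$ (indeed $b_{nk}^{a}=s_{k}^{1}$ for every $n>k$), and that its diagonal entry is $b_{kk}^{a}=s_{k}^{2}$.

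For part $(i)$, Theorem 7 identifies both $\ell_{K_{1}}^{\beta}(\lambda,p)$ and $\ell_{K_{1}}^{\gamma}(\lambda,p)$ with $\{a:B^{a}\in(\ell(p);\ell_{\infty})\}$. Since $p_{k}\leq 1$ on $K_{1}$, the governing criterion is $\sup_{n,k}|b_{nk}^{a}|^{p_{k}}<\infty$. Because in each column $k$ the entries $b_{nk}^{a}$ take only the two values $s_{k}^{1}$ (for $n>k$) and $s_{k}^{2}$ (for $n=k$), this double supremum collapses to $\sup_{k}|s_{k}^{1}|^{p_{k}}<\infty$ together with $\sup_{k}|s_{k}^{2}|^{p_{k}}<\infty$, which is exactly $s^{1},s^{2}\in\ell_{\infty}(p)$, giving $(i)$.

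For part $(ii)$, Theorem 7 identifies both $\ell_{K_{2}}^{\beta}(\lambda,p)$ and $\ell_{K_{2}}^{\gamma}(\lambda,p)$ with $\{a:B^{a}\in(\ell(p);c)\}$, and for $p_{k}>1$ the governing condition is that $\lim_{n}b_{nk}^{a}$ exists for each $k$ and that there is an integer $M>1$ with $\sup_{n}\sum_{k}|b_{nk}^{a}M^{-1}|^{\grave{p}_{k}}<\infty$, where $\grave{p}_{k}=p_{k}/(p_{k}-1)$ is the conjugate exponent written $p^{\prime}$ in the statement. The limit condition is automatic: the eventual constancy of the columns gives $\lim_{n}b_{nk}^{a}=s_{k}^{1}$. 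For the boundedness condition I will write the $n$-th row-sum as $\sum_{k=0}^{n-1}|s_{k}^{1}M^{-1}|^{\grave{p}_{k}}+|s_{n}^{2}M^{-1}|^{\grave{p}_{n}}$ and take the supremum over $n$, which is finite precisely when $\sum_{k}|s_{k}^{1}M^{-1}|^{\grave{p}_{k}}<\infty$ and $\sup_{n}|s_{n}^{2}M^{-1}|^{\grave{p}_{n}}<\infty$. Taking the union over $M>1$ and invoking the elementary inclusion $\ell(p^{\prime})\subseteq\ell_{\infty}(p^{\prime})$ (so that $\ell(p^{\prime})\cap\ell_{\infty}(p^{\prime})=\ell(p^{\prime})$) then recasts these requirements against the membership $s^{1}M^{-1},s^{2}M^{-1}\in\ell(p^{\prime})\cap\ell_{\infty}(p^{\prime})$ printed in $(ii)$.

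I expect the only delicate point to be the bookkeeping in $(ii)$: the off-diagonal constant entries $s_{k}^{1}$ contribute the full series $\sum_{k}|s_{k}^{1}M^{-1}|^{\grave{p}_{k}}$, whereas the diagonal entry $s_{n}^{2}$ enters each row as a single term, so the supremum over $n$ decouples asymmetrically into a \emph{series} condition on $s^{1}$ and a mere \emph{boundedness} condition on $s^{2}$. Verifying that no residual cross term or surviving row-limit condition remains beyond the per-column limits already handled, and reconciling this asymmetric outcome with the symmetric set written in the statement (where both $s^{1}$ and $s^{2}$ are placed in $\ell(p^{\prime})\cap\ell_{\infty}(p^{\prime})$) via the inclusion noted above, is the step that requires care.
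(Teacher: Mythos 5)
Your route coincides with the paper's own (the paper offers no argument at all beyond ``from Theorem 7''; the intended proof is precisely what you do: substitute the triangle $B^{a}$ into Grosse-Erdmann's characterizations of $(\ell (p);\ell _{\infty })$ and $(\ell (p);c)$). Your matrix bookkeeping is also correct: each column of $B^{a}$ is eventually constant equal to $s_{k}^{1}$ with diagonal entry $s_{k}^{2}$, so the column-limit condition is automatic, and the row condition decouples into the series condition $\sum_{k}\left\vert s_{k}^{1}M^{-1}\right\vert ^{\grave{p}_{k}}<\infty $ on $s^{1}$ together with the boundedness condition $\sup_{n}\left\vert s_{n}^{2}M^{-1}\right\vert ^{\grave{p}_{n}}<\infty $ on $s^{2}$. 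Part $(i)$ is complete as you present it.

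The genuine gap is the final step of $(ii)$, which you defer as ``the step that requires care'': it is not delicate, it is impossible, because the identity of sets you hope to finish with is false. Since $\ell (p^{\prime })\subseteq \ell _{\infty }(p^{\prime })$, the set printed in $(ii)$ demands $s^{2}M^{-1}\in \ell (p^{\prime })$, i.e.\ \emph{summability} of $\left\vert s_{k}^{2}M^{-1}\right\vert ^{\grave{p}_{k}}$, which is strictly stronger than the boundedness your (correct) computation yields; the inclusion you invoke cuts the wrong way. Concretely, take $\lambda _{k}=k+1$, $p_{k}=2$ (so $\grave{p}_{k}=2$) and $a_{k}=\frac{1}{k+1}$. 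Then $s_{k}^{1}=\frac{1}{k+2}$ and $s_{k}^{2}=1$ for all $k$, so $s^{2}M^{-1}\in \ell _{\infty }(p^{\prime })\setminus \ell (p^{\prime })$ for every $M$; yet by $(4.4)$, for $y=\Lambda x\in \ell _{2}$ we have $\sum_{k=0}^{n}a_{k}x_{k}=\sum_{k=0}^{n-1}\frac{y_{k}}{k+2}+y_{n}$, which converges for every $y\in \ell _{2}$ (Cauchy--Schwarz plus $y_{n}\rightarrow 0$), so this $a$ lies in the $\beta -$dual of $\ell (\lambda ,p)$ but not in the set printed in $(ii)$. Hence the asymmetric characterization you derived, $s^{1}M^{-1}\in \ell (p^{\prime })$ and $s^{2}M^{-1}\in \ell _{\infty }(p^{\prime })$, is the correct statement; the corollary as printed must either be read as pairing $s^{1},s^{2}$ \emph{respectively} with $\ell (p^{\prime }),\ell _{\infty }(p^{\prime })$, or be regarded as misstated. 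Your proof should end by saying this outright; as written, the deferred reconciliation step would fail.
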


After this step, we can give our theorems on the characterization of some
matrix classes concerning \ with the sequence space $\ell \left( \lambda
,p\right) .$

Let $x,y\in w$ be connected by the relation $y=\Lambda (x)$. For an infinite
matrix $A=(a_{nk})$, we have by using (4.4) of Theorem 7 that
\begin{equation}
\sum_{k=0}^{m}a_{nk}x_{k}=\sum_{k=0}^{m-1}\tilde{a}_{nk}y_{k}+\frac{\lambda
_{m}}{\lambda _{m}-\lambda _{m-1}}a_{nm}y_{m}~~~~~(m,n\in \mathbb{N})
\label{4.5}
\end{equation}%
where
\begin{equation*}
\tilde{a}_{nk}=\left( \frac{a_{nk}}{\lambda _{k}-\lambda _{k-1}}-\frac{%
a_{n,k+1}}{\lambda _{k+1}-\lambda _{k}}\right) \lambda _{k};~~~~(n,k\in
\mathbb{N}).
\end{equation*}%
The necessary and sufficient conditions characterizing the matrix mapping of
the sequence space $\ell \left( p\right) $ of Maddox have been determined by
Grosse-Erdmann \cite{Gro-Erd1993}. Let $L$ and $M$ be the natural numbers
and define the sets by $K_{1}=\left\{ k\in \mathbb{N}:p_{k}\leq 1\right\} $
and $K_{2}=\left\{ k\in \mathbb{N}:p_{k}>1\right\} $ also let us put $\grave{%
p}_{k}=\frac{p_{k}}{p_{k}-1}$ for $1<p_{k}<\infty $ and for all $k\in
\mathbb{N}.$ Before giving the theorems, let us suppose that $\left(
q_{n}\right) $ is a non-decreasing bounded sequence of positive real numbers
and consider the following conditions:

$\sup_{N}$ sup$_{k\in K_{1}}\left\vert \tsum\limits_{n\in N}\tilde{a}%
_{nk}\right\vert ^{q_{n}}<\infty ,\ \ \ \ \ \ \ \ \ \ \ \ \exists M$ $%
\sup_{N}\tsum\limits_{k\in K_{2}}\left\vert \tsum\limits_{n\in N}\tilde{a}%
_{nk}M^{-1}\right\vert ^{\grave{p}_{k}}<\infty ,$

\ \ \ \ \ \ \ \ \ \ \ \ \ \ \ \ \ \ \ \ $\uparrow \left( 4.6\right) $ \ \ \
\ \ \ \ \ \ \ \ \ \ \ \ \ \ \ \ \ \ \ \ \ \ \ \ \ \ \ \ \ \ $\ \ \ \ \ \ \ \
\ \uparrow \left( 4.7\right) $

$\exists M$ sup$_{k}\tsum\limits_{n}\left\vert \tilde{a}_{k}M^{-\frac{1}{%
p_{k}}}\right\vert ^{q_{n}}<\infty ,\ \ \ \ \ \ \ \ \ \ \ \ \ \
\lim_{n}\left\vert \tilde{a}_{nk}\right\vert ^{q_{n}}=0$ $\left( \forall
k\in \mathbb{N}\right) ,$

\ \ \ \ \ \ \ \ \ \ \ \ \ \ \ \ \ \ \ $\uparrow \left( 4.8\right) $ \ \ \ \
\ \ \ \ \ \ \ \ \ \ \ \ \ \ \ \ \ \ \ \ \ \ \ \ \ \ \ \ \ \ \ \ \ \ $\ \ \ \
\ \uparrow \left( 4.9\right) $

$\forall L,$ sup$_{n}$ $sup_{k\in K_{1}}\left\vert \tilde{a}_{nk}L^{^{\frac{1%
}{q_{n}}}}\right\vert ^{p_{k}}<\infty ,\ \ \ \ \ \ \forall L,\exists M$ $%
sup_{n}\tsum\limits_{k\in K_{2}}\left\vert \tilde{a}_{k}L^{^{\frac{1}{q_{n}}%
}}M^{-1}\right\vert ^{\grave{p}_{k}}<\infty ,$

\ \ \ \ \ \ \ \ \ \ \ \ \ \ \ \ \ \ \ \ $\uparrow 4.10$ \ \ \ \ \ \ \ \ \ \
\ \ \ \ \ \ \ \ \ \ \ \ \ \ \ \ \ \ \ \ \ \ \ \ \ \ \ \ \ \ \ \ \ \ $%
\uparrow \left( 4.11\right) $\

$\sup_{n}$ sup$_{k\in K_{1}}\left\vert \tilde{a}_{nk}\right\vert
^{p_{k}}<\infty ,\ \ \ \ \ \ \ \ \ \ \ \ \ \ \ \ \ \ \ \ \ \exists M$ $%
sup_{n}\tsum\limits_{k\in K_{2}}\left\vert \tilde{a}_{k}M^{-1}\right\vert ^{%
\grave{p}_{k}}<\infty ,$

\ \ \ \ \ \ \ \ \ \ \ \ \ \ \ \ \ \ \ \ \ $\uparrow \left( 4.12\right) $ \ \
\ \ \ \ \ \ \ \ \ \ \ \ \ \ \ \ \ \ \ \ \ \ \ \ \ \ \ \ \ \ \ \ \ \ \ \ $\
\uparrow \left( 4.13\right) $

$\forall L,$ sup$_{n}sup_{_{k\in K_{1}}}\left( \left\vert \tilde{a}_{nk}-%
\tilde{a}_{k}\right\vert L^{^{\frac{1}{q_{n}}}}\right) ^{p_{k}}<\infty ,$ \
\ \ $\lim_{n}\left\vert \tilde{a}_{nk}-\tilde{a}_{k}\right\vert ^{q_{n}}=0,$%
for all $k.$

\ \ \ \ \ \ \ \ \ \ \ \ \ \ \ \ \ \ \ \ $\uparrow \left( 4.14\right) $ \ \ \
\ \ \ \ \ \ \ \ \ \ \ \ \ \ \ \ \ \ \ \ \ \ \ \ \ \ \ \ \ \ \ \ \ \ \ \ \ \
\ \ \ \ $\uparrow \left( 4.15\right) $

$\forall L,\exists M$ sup$_{n}\tsum\limits_{_{k\in K_{2}}}\left( \left\vert
\tilde{a}_{nk}-\tilde{a}_{k}\right\vert L^{^{\frac{1}{q_{n}}}}M^{-1}\right)
^{\grave{p}_{k}},$ $\exists L, $sup$_{n}sup_{_{k\in K_{1}}}\left\vert \tilde{%
a}_{nk}L^{^{-\frac{1}{q_{n}}}}\right\vert ^{p_{k}}<\infty ,$

\ \ \ \ \ \ \ \ \ \ \ \ \ \ \ \ \ \ \ \ $\uparrow \left( 4.16\right) $ \ \ \
\ \ \ \ \ \ \ \ \ \ \ \ \ \ \ \ \ \ \ \ \ \ \ \ \ \ \ \ \ \ \ \ \ \ \ \ \ \
\ \ \ \ \ \ \ $\uparrow \left( 4.17\right) $

$\exists L,$sup$_{n}\tsum\limits_{_{k\in K_{2}}}\left\vert \tilde{a}%
_{nk}L^{^{-\frac{1}{q_{n}}}}\right\vert ^{\grave{p}_{k}}<\infty ,$ \ \ \ \ \
\ \ \ \ \ \ $\ \ \ \left( \frac{\lambda _{k}}{\lambda _{k}-\lambda _{k-1}}%
a_{nk}\right) _{k=0}^{\infty }\in c_{0}\left( q\right) $ $\left( \forall
n\in \mathbb{N}\right) $

\ \ \ \ \ \ \ \ \ \ \ \ \ \ \ \ \ \ \ \ $\uparrow \left( 4.18\right) $ \ \ \
\ \ \ \ \ \ \ \ \ \ \ \ \ \ \ \ \ \ \ \ \ \ \ \ \ \ \ \ \ \ \ \ \ \ \ \ \ $%
\uparrow \left( 4.19\right) $\ \

$\ \left( \frac{\lambda _{k}}{\lambda _{k}-\lambda _{k-1}}a_{nk}\right)
_{k=0}^{\infty }\in c\left( q\right) $ $\left( \forall n\in \mathbb{N}%
\right) $\ \ \ \ \ \ \ \ \ $\left( \frac{\lambda _{k}}{\lambda _{k}-\lambda
_{k-1}}a_{nk}\right) _{k=0}^{\infty }\in \ell _{\infty }\left( q\right) $ $%
\left( \forall n\in \mathbb{N}\right) $

\ \ \ \ \ \ \ \ \ \ \ \ \ \ \ \ \ \ \ $\uparrow \left( 4.20\right) $ \ \ \ \
\ \ \ \ \ \ \ \ \ \ \ \ \ \ \ \ \ \ \ \ \ \ \ \ \ \ \ \ \ \ \ \ \ \ \ \ \ $%
\uparrow \left( 4.21\right) $

By using $\left( 4.3\right) ,\left( 4.5\right) $ and Corollary 2, we have
the following results:

\begin{theorem}
We have

$\left( i\right) $ $A\in \left( \ell \left( \lambda ,p\right) :\ell \left(
q\right) \right) $ if and only if $\left( 4.6\right) ,\left( 4.7\right)
,\left( 4.8\right) $ and $\left( 4.19\right) $\ hold.

$\left( ii\right) $ $A\in \left( \ell \left( \lambda ,p\right) :c_{0}\left(
q\right) \right) $ if and only if $\left( 4.9\right) ,\left( 4.10\right)
,\left( 4.11\right) $ and $\left( 4.19\right) $\ hold.

$\left( iii\right) ${\small \ }$A\in \left( \ell \left( \lambda ,p\right)
:c\left( q\right) \right) ${\small \ }if and only if $\left( 4.12\right)
{\small ,}\left( 4.13\right) ,\left( 4.14\right) ,\left( 4.15\right) ,\left(
4.16\right) $ and $\left( 4.20\right) $\ hold.

$\left( iv\right) $ $A\in \left( \ell \left( \lambda ,p\right) :\ell
_{\infty }\left( q\right) \right) $ if and only if $\left( 4.17\right)
,\left( 4.18\right) $ and $\left( 4.21\right) $\ hold.
\end{theorem}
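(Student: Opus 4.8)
The plan is to reduce the characterization of $\left( \ell\left(\lambda,p\right):\mu\right)$, for each of the four target spaces $\mu\in\left\{\ell(q),c_0(q),c(q),\ell_\infty(q)\right\}$, to the already-known characterization of $\left(\ell(p):\mu\right)$ due to Grosse-Erdmann. The two mechanisms that make this reduction possible are the linear isomorphism $T:x\mapsto y=\Lambda(x)$ of Theorem 2 and the summation-by-parts identity (4.5), so the whole argument is essentially a transport of the problem from $\ell\left(\lambda,p\right)$ to $\ell(p)$.

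First I would fix a matrix $A=(a_{nk})$ and a point $x\in\ell\left(\lambda,p\right)$, and set $y=\Lambda(x)$. For the $A$-transform $A_n(x)=\sum_k a_{nk}x_k$ to be defined for every $x\in\ell\left(\lambda,p\right)$, each row $(a_{nk})_k$ must lie in the $\beta$-dual $\ell\left(\lambda,p\right)^\beta$. By Theorem 7 (formula (4.3)) and Corollary 2 this requirement is exactly the statement that the associated sequences $s^1,s^2$ built from the $n$-th row are controlled; in the present matrix setting these are precisely the row conditions (4.19), (4.20), (4.21), which assert that $\left(\tfrac{\lambda_k}{\lambda_k-\lambda_{k-1}}a_{nk}\right)_k$ belongs to $c_0(q)$, $c(q)$, $\ell_\infty(q)$ respectively for each fixed $n$. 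Observe that $\tilde{a}_{nk}=\Delta\!\left(\tfrac{a_{nk}}{\lambda_k-\lambda_{k-1}}\right)\lambda_k$ is the matrix analogue of $s^1_k$, while the boundary term $\tfrac{\lambda_m}{\lambda_m-\lambda_{m-1}}a_{nm}y_m$ in (4.5) is the analogue of $s^2_m y_m$. Under the above row conditions I may let $m\to\infty$ in (4.5): the boundary term vanishes and I obtain $A_n(x)=\sum_k\tilde{a}_{nk}y_k=\tilde{A}_n(y)$, so that $Ax$ and $\tilde{A}y$ coincide coordinatewise.

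Next I would invoke Theorem 2: since $T$ is a linear bijection of $\ell\left(\lambda,p\right)$ onto $\ell(p)$, as $x$ ranges over $\ell\left(\lambda,p\right)$ the image $y=\Lambda(x)$ ranges over all of $\ell(p)$. Hence $Ax\in\mu$ for every $x\in\ell\left(\lambda,p\right)$ if and only if $\tilde{A}y\in\mu$ for every $y\in\ell(p)$, i.e. if and only if $\tilde{A}=(\tilde{a}_{nk})\in\left(\ell(p):\mu\right)$. I then substitute $\tilde{a}_{nk}$ into Grosse-Erdmann's necessary and sufficient conditions for the four classes $\left(\ell(p):\ell(q)\right)$, $\left(\ell(p):c_0(q)\right)$, $\left(\ell(p):c(q)\right)$ and $\left(\ell(p):\ell_\infty(q)\right)$. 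The splitting into $K_1=\{k:p_k\le 1\}$ and $K_2=\{k:p_k>1\}$, with the conjugate exponents $\grave{p}_k$ entering on $K_2$, reproduces verbatim the conditions (4.6)--(4.8) for part $(i)$, (4.9)--(4.11) for part $(ii)$, (4.12)--(4.16) for part $(iii)$ and (4.17)--(4.18) for part $(iv)$; adjoining in each case the corresponding row condition from the first step yields all four equivalences.

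The main obstacle lies in the first step. The delicate point is to show that the row conditions (4.19)--(4.21) are not merely sufficient but \emph{necessary} for $A_n(x)$ to exist on all of $\ell\left(\lambda,p\right)$, and that under them the passage to the limit in (4.5) is legitimate term-by-term, so that the boundary term genuinely tends to zero (rather than being assumed to) and $A_n(x)=\tilde{A}_n(y)$ holds exactly. Verifying that the resulting series $\sum_k\tilde{a}_{nk}y_k$ converges for \emph{every} $y\in\ell(p)$ is precisely where the $\beta$-dual description of Theorem 7 and Hölder's inequality on the index set $K_2$ carry the load; the remaining steps are then bookkeeping that matches the transformed conditions against Grosse-Erdmann's list.
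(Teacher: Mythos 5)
Your proposal is correct and takes essentially the same route as the paper: the paper's entire ``proof'' is the one-line remark that the theorem follows from $(4.3)$, $(4.5)$ and Corollary 2 together with Grosse-Erdmann's characterizations of the classes $\left( \ell \left( p\right) :\mu \right) $, which is precisely the reduction you carry out. In fact your write-up supplies the details the paper leaves implicit --- the necessity of the row conditions via the $\beta$-dual, the vanishing of the boundary term in $(4.5)$, and the transfer of the problem to $\ell \left( p\right) $ through the isomorphism of Theorem 2.
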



\begin{thebibliography}{99}
\bibitem{AltayBasar1} B. Altay, F. Ba\c{s}ar, On the paranormed Riesz
sequence spaces of non-absolute type, Southeast Asian Bull. Math., $%
26(5)(2002),$ $701-715.$

\bibitem{PBasarAltay2 } B. Altay, F. Ba\c{s}ar Generalization of the
sequence space $\ell (p)$ derived by weighted\ mean, J. Math. Anal. Appl., $%
330(2007),$ $174-185.$

\bibitem{BasarAltay} F. Ba\c{s}ar, B. Altay, On the space of sequences of $p$%
-bounded variation and related matrix mappings, Ukrainian Math. J., $%
55(1)(2003),$ $136-147.$

\bibitem{PBasarAltay1} F. Basar, B. Altay, Some paranormed sequence spaces
of non-absolute type derived by weighted mean, J. Math. Anal. Appl., $%
319(2006)$, $494-508.$

\bibitem{PBasarAltay2} F. Ba\c{s}ar, B. Altay, Matrix mappings on the space $%
bs\left( p\right) $ and its $\alpha $-, $\beta $- and $\gamma $-duals,
Aligarh Bull. Math., $21(2002),$ no.$1,79-91.$

\bibitem{ChMis} B. Choudhary, S.K. Mishra, On K\"{o}the--Toeplitz duals of
certain sequence spaces and their matrix transformations,Indian J. Pure
Appl. Math., $24(1993)$, $291-301.$

\bibitem{Gro-Erd1993} K. -G. Gross-Erdman, Matrix transformations between
the sequence spaces of Maddox, J., Math. Anal. Appl., $180$ $\left(
1993\right) $, $223-238$.

\bibitem{PKarakayaPolat} V.Karakaya, H. Polat, Some new paranormed sequence
spaces defined by Euler anddifference operators,Acta Sci. Math. (Szeged)$%
76(2010),$ $87-100.$

\bibitem{PKarakayaNH} V.Karakaya, A.K. Noman, H. Polat, On Paranormed $%
\lambda $-Sequence Spaces of Non-absolute Type, Mathematical and Computer
Modelling $\left( 2011\right) ,$ doi:10.1016/j.mcm.2011.04.019.

\bibitem{I.J.Maddox1968} I. J. Maddox, Paranormed sequence spaces generated
by infinite matrices, Proc. Cambridge Philos. Soc., $64\left( 1968\right) $ $%
335-340.$

\bibitem{Maddoxelmt1988} I. J. Maddox, \textit{Elements of Functional
Analysis,} The University Press, 2$^{nd}$ ed., Cambridge,$1988$.

\bibitem{Maddox1967} I. J. Maddox, Space of strongly summable sequences,
Quart. J. Math. Oxford, $18(2)(1967)$, $345-355$.

\bibitem{Makowskysavas} E. Malkowsky, E. Sava\c{s}, Matrix transformations
between sequence spaces of generalized weighted means, Appl. Math. Comput., $%
147(2004)$, $333-345.$

\bibitem{MursaleenNoman} M. Mursaleen, A. K. Noman, On the Spaces of $%
\lambda $-Convergent and Bounded Sequences, Thai Journal of Mathematics, $%
8(2)(2010)$, $311-329.$
\end{thebibliography}
\end{document}